\newcommand{\Bmu}{\mbox{$\raisebox{-0.59ex}
  {$l$}\hspace{-0.18em}\mu\hspace{-0.88em}\raisebox{-0.98ex}{\scalebox{2}
  {$\color{white}.$}}\hspace{-0.416em}\raisebox{+0.88ex}
  {$\color{white}.$}\hspace{0.46em}$}{}}
\DeclareFontFamily{U}{wncy}{}
\DeclareFontShape{U}{wncy}{m}{n}{<->wncyr10}{}
\DeclareSymbolFont{mcy}{U}{wncy}{m}{n}
\DeclareMathSymbol{\Sha}{\mathord}{mcy}{"58}
\DeclareMathOperator{\Ad}{{\rm Ad}}
\DeclareMathOperator{\SL}{SL}
\DeclareMathOperator{\SLT}{\SL_{2}}
\DeclareMathOperator{\Ss}{S}
\DeclareMathOperator{\Pro}{P\!}
\newcounter{ctfig}
\newcommand{\Z}{{\mathbb Z}}
\newcommand{\G}{\mathcal{G}}
\newcommand{\Gn}{\mathcal{G}_n}
\newcommand{\Q}{{\mathbb Q}}
\newcommand{\cS}{\mathcal{S}}
\newcommand{\R}{{\mathbb R}}
\newcommand{\C}{\mathcal{C}}
\newcommand{\HH}{\mathcal{H}}
\newcommand{\sH}{\mathscr{H}}
\newcommand{\OO}{\mathcal{O}}
\newcommand{\OOn}{\OO_n}
\newcommand{\fp}{\mathfrak{p}}
\newcommand{\CC}{\mathbb C}
\DeclareMathOperator{\UT}{U_{2}}
\DeclareMathOperator{\Res}{Res}
\DeclareMathOperator{\GLT}{GL_{2}}
\DeclareMathOperator{\SLTh}{SL_{3}}
\DeclareMathOperator{\PUT}{PU_{2}}
\DeclareMathOperator{\SUT}{SU_{2}}
\DeclareMathOperator{\Disc}{Disc}
\DeclareMathOperator{\UTz}{U_2^\zeta}
\DeclareMathOperator{\SOT}{SO_{3}}
\DeclareMathOperator{\PUTz}{PU_2^\zeta}
\DeclareMathOperator{\PSUT}{PSU_{2}}
\DeclareMathOperator{\Tr}{Tr}
\DeclareMathOperator{\Nm}{N}
\DeclareMathOperator{\Sel}{Sel}
\DeclareMathOperator{\Cl}{Cl}
\DeclareMathOperator{\val}{val}
\newtheorem{theorem}{Theorem}[section]
\newtheorem{lemma}[theorem]{Lemma}
\newtheorem{proposition}[theorem]{Proposition}
\newtheorem{example1}[theorem]{Example}
\theoremstyle{example}
\theoremstyle{remark}
\theoremstyle{corollary}
\theoremstyle{definition}
\newtheorem{remark1}[theorem]{Remark}
\newtheorem{definition1}[theorem]{Definition}
\newtheorem{corollary}[theorem]{Corollary}
\begin{document}
\bibliographystyle{plain}
\bibstyle{plain}

\title[The Clifford-cyclotomic group]
{The Clifford-cyclotomic group and Euler-Poincar\'{e} characteristics}

\author{Colin Ingalls \and Bruce W. Jordan \and Allan Keeton \and \\
Adam Logan \and Yevgeny Zaytman}

\address{School of Mathematics and Statistics, Carleton University, Ottawa, ON K1S 5B6, Canada}
\email{cingalls@math.carleton.ca}

\address{Department of Mathematics, Box B-630, Baruch College,
The City University of New York, One Bernard Baruch Way, New York
NY 10010}
\email{bruce.jordan@baruch.cuny.edu}

\address{Center for Communications Research, 805 Bunn Drive,
Princeton, NJ 08540}
\email{agk@idaccr.org}

\address{The Tutte Institute for Mathematics and Computation,
  P.O. Box 9703, Terminal, Ottawa, ON K1G 3Z4, Canada}
\address{School of Mathematics and Statistics, 4302 Herzberg
  Laboratories, 1125 Colonel By Drive, Ottawa, ON K1S 5B6, Canada}
\email{adam.m.logan@gmail.com}

\address{Center for Communications Research, 805 Bunn Drive,
Princeton, NJ 08540}
\email{ykzaytm@idaccr.org}

\subjclass[2010]{Primary 81P45; Secondary 20G30}
\keywords{Clifford group, T gate, Clifford cyclotomic, Euler-Poincar\'{e} characteristics}

\begin{abstract}
For an integer $n\geq 8$ divisible by $4$, let $R_n=\Z[\zeta_n,1/2]$ and
let $\UT(R_n)$ be the group of $2\times 2$ unitary matrices with entries in $R_n$. Set
\mbox{$\UTz(R_n)=\{\gamma\in\UT(R_n)\mid \det\gamma\in\langle\zeta_n\rangle\}$}.
Let $\Gn\subseteq \UTz(R_n)$ be the Clifford-cyclotomic group generated
by a Hadamard matrix $H=\frac{1}{2}[\begin{smallmatrix}
1+i & 1+i\\1+i &-1-i\end{smallmatrix}]$ and the gate
$T_n=[\begin{smallmatrix}1 & 0\\0 & \zeta_n\end{smallmatrix}]$.
We prove that $\Gn=\UTz(R_n)$ if and only if $n=8, 12, 16, 24$
and that $[\UTz(R_n):\Gn]=\infty$ if $\UTz(R_n)\neq \Gn$.
We compute the Euler-Poincar\'{e} characteristic of the groups
$\SUT(R_n)$, $\PSUT(R_n)$, $\PUT(R_n)$, $\PUTz(R_n)$, and $\SOT(R_n^+)$.

\end{abstract}

\maketitle

\section{Introduction}
\label{taco}
Let $\UT = \{ g \in \GLT(\CC)\mid gg^\dagger = 1\}$ be the group of $2\times 2$ unitary matrices stabilizing the standard hermitian form on $\CC^2$ with $\dagger$
denoting conjugate-transpose. Let 
$\UTz$ and $\SUT$ be its subgroups of matrices whose determinants are roots of unity or 1 respectively. 
For a subring  $R \subseteq \CC$, write $\UT(R):= \UT \cap \GLT(R)$
for the subgroup of $\UT$ whose matrix entries lie in $R$; similarly
$\UTz(R):= \UTz \cap \GLT(R)$,
 and $\SUT(R) := \SUT \cap \SLT(R)$.
Let \mbox{$\SOT=\{g\in \SLTh(\R) \mid g g^t=1\}$}.
For a subring $R^+\subseteq \R$, write $\SOT(R^+)$ for the subgroup of 
$\SOT$ whose entries lie in $R^+$.

Throughout this paper $n=2^s d$ is a positive integer with $d$ odd.
Unless explicitly stated otherwise, we assume
$s\geq 2$.
Let $\zeta_n:=e^{2\pi i/n}$, $K_n:=\Q(\zeta_n)$, and $R_n=\Z[\zeta_n,1/2]$. 
Set $F_n=K_n^+:=\Q(\zeta_n+\overline{\zeta_n})$ and 
$R_n^+=\Z[\zeta_n+\overline{\zeta}_n, 1/2]$.
Then $i\in R_n$ and $R_n=R_n^+ \oplus R_n^+i$ since $1/2\in R_n^+$.
The {\em Clifford group} $\C$ can be defined as 
$\C=\UT(R_4)$ \cite[Section 2.1]{fgkm}. 
Set
\begin{equation}
\label{snow}
T_n:=\left[\begin{array}{cc}
1 & 0\\
0 & \zeta_n\end{array}\right]\in \UT(\Z[\zeta_n])\subseteq \UT(R_n).
\end{equation}
Define the {\em Clifford-cyclotomic group} \cite[Section 2.2]{fgkm}
(resp., {\em special} Clifford-cyclotomic group) by
\begin{equation}
\label{sun}
\Gn=\langle \C, T_n\rangle\qquad\mbox{\textup{(}resp., $\Ss\! 
\Gn=\Gn\cap \SUT(R_n)$\textup{)};}
\end{equation}
we have $\Gn\subseteq \UTz(R_n)$.  In general, 
$\UTz(R_n)\subsetneq  \UT(R_n)$. For a subgroup $H\leq \UT(R_n)$, denote
by $\Pro H$ the image of $H$ in $\PUT(R_n)$.
The Adjoint map $\Ad:\SUT(R_n) \rightarrow \SOT(R_n^+)$ induces maps $\pi:\UT(R_n)\rightarrow
\SOT(R_n^+)$ and $\overline{\pi}:\PUT(R_n)\rightarrow \SOT(R_n^+)$; 
see Section \ref{yellow}.

Let $G(r,s)$ be the subgroup of $\SOT(\R)$ generated by rotations
of order $r$  and order $s$ about chosen perpendicular axes.  For an appropriate
choice of axes one has $G(4, n)\subseteq \SOT(R_n^+)$.
In Theorem \ref{amalgam} we show that $\pi(\Gn)=G(4,n)$.
The subgroup structures 
\begin{equation}
\label{rational}
\Gn\leq \UTz(R_n)\leq \UT(R_n), \quad G(4,n)\leq \SOT(R_n^+)
\end{equation}
play a large role in exact synthesis for quantum gates 
in single-qubit quantum computation.  The following
results are known:
\begin{theorem}
\label{gone}
\begin{enumerate}[\upshape (a)]
\item
\label{gone1}
$G(4,8)=\SOT(R_8^+)$ and $\G_8=\UT(R_8)$ \cites{s2,fgkm}, 
$G(4,12)=\SOT(R_{12}^+)$ and $\G_{12}=\UT(R_{12})$
\cites{s2, brs}, $G(4,16)=\SOT(R_{16}^+)$ and  $\G_{16}=\UT(R_{16})$ \cite{s2}, 
$G(4, 24)=\SOT(R_{24}^+)$ and $\G_{24}=\UT(R_{24})$
\cite{fgkm}.
\item
\label{gone2}
For an integer $n$ we have $\UTz(R_n)=\UT(R_n)$ if and only if
\[
-1\bmod d\in\langle 2\bmod d\rangle \leq (\Z/d\Z)^{\times}
\]
\cite[Theorem 5.3]{fgkm}.
\item
\label{gone3}
Let $S_4$ be the symmetric group
on $4$ letters and $D_m$ be the dihedral group
of order $2m$. We have $G(4,n)\cong S_4\ast_{D_4}D_n$ \cite{rs}.
\item
\label{gone4}
If $n=2^s$, $s\geq 5$, then 
$G(4,n)$ is of infinite index in $\SOT(R_n^+)$
\cite{s2}.
\end{enumerate}
\end{theorem}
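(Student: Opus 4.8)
Since parts (a)--(d) are quoted from \cites{s2,fgkm,brs,rs}, the plan is really to recall the four arguments and to indicate where the work lies. Throughout I use the identity $\pi(\Gn)=G(4,n)$ of Theorem~\ref{amalgam}, together with control of the kernel and image of $\Ad$ on $R_n$-points, so that assertions about $\Gn\subseteq\UT(R_n)$ transfer to assertions about $G(4,n)\subseteq\SOT(R_n^+)$ and conversely.

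Part (a) is an \emph{exact synthesis} statement. I would attach to each $U\in\UT(R_n)$ a nonnegative integer $k(U)$ measuring the size of the $2$-power denominators of its entries (a ``smallest denominator exponent''). The heart is a reduction lemma: if $k(U)>0$, then $k(gU)<k(U)$ for a suitable generator word $g$ of the form $T_n^{\,a}H$ or a Clifford matrix. One proves this by examining the leading parts of the entries of $U$ modulo a prime above $2$ and running a finite case analysis; the analysis closes precisely because $\Z[\zeta_n]$ has class number one and the residue structure at $2$ is small enough when $n\in\{8,12,16,24\}$. When $k(U)=0$ the matrix $U$ is monomial with root-of-unity entries, hence visibly in $\Gn$; therefore $\Gn=\UT(R_n)$, and applying $\pi$ gives $G(4,n)=\SOT(R_n^+)$.

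For part (b), note that $\det$ maps $\UT(R_n)$ onto $U^1:=\{u\in R_n^\times : u\bar u=1\}$, since $\mathrm{diag}(u,1)\in\UT(R_n)$ exactly when $u\bar u=1$; hence $\UTz(R_n)=\UT(R_n)$ if and only if $U^1=\langle\zeta_n\rangle$. Writing $u=v/\bar v$ by Hilbert~90, tracking which $v$ yield units of $R_n$, and quotienting out the evident norm-one units $\zeta/\bar\zeta$, the group $U^1/\langle\zeta_n\rangle$ is governed by whether complex conjugation on $\Q(\zeta_d)$ is a power of the Frobenius at $2$, i.e.\ by whether $-1\in\langle 2\rangle\le(\Z/d\Z)^\times$; this is \cite[Theorem~5.3]{fgkm}. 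For parts (c) and (d), I would use that $\SOT(R_n^+)$ is a cocompact $S$-arithmetic lattice, $S$ being the place(s) of $F_n$ above $2$, in the product $\prod_{v\mid 2}\SOT(F_{n,v})$; it therefore acts on the product of the Bruhat--Tits trees at these places, and when $n=2^s$ there is a single place above $2$, with residue field $\mathbb{F}_2$, so the tree is $(2{+}1)$-regular. For (c) one exhibits $G(4,n)$ acting on the relevant tree with fundamental domain a single edge, the two vertex stabilizers being an octahedral $S_4$ and a dihedral $D_n$ --- the rotations fixing each of the two perpendicular axes --- and the edge stabilizer their intersection $D_4=S_4\cap D_n$; Serre's structure theory for groups acting on trees then yields $G(4,n)\cong S_4\ast_{D_4}D_n$ once the quotient graph is checked to be an edge, by a ping-pong count (alternatively, (c) can be proved by a direct normal-form argument for words in the two rotations on $S^2$).

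For part (d) with $n=2^s$, $s\ge5$, I would compare Euler--Poincar\'e characteristics: from (c), $\chi(G(4,n))=\tfrac{1}{24}+\tfrac{1}{2n}-\tfrac{1}{8}=\tfrac{1}{2n}-\tfrac{1}{12}$, whereas $\chi(\SOT(R_n^+))$ is a nonzero rational number evaluated by Harder's Gauss--Bonnet formula in terms of special values of Dedekind zeta functions of $F_n$; if $G(4,n)$ had finite index $m$ in $\SOT(R_n^+)$, then $m=\chi(G(4,n))/\chi(\SOT(R_n^+))$ would have to be a positive integer, and for $s\ge5$ it is not, so the index is infinite. The main obstacle is the reduction lemma in (a): establishing that $k(U)$ can always be decreased is exactly the point at which the special arithmetic of $n\in\{8,12,16,24\}$ is used, and it is this step that breaks down for larger $n$, consistently with (d). A secondary difficulty for (c)--(d) is the bookkeeping: pinning down the quotient graph of groups and the normalizations relating $\chi(G(4,n))$, $\chi(\SOT(R_n^+))$, and the Euler--Poincar\'e characteristics of the auxiliary groups $\SUT(R_n)$, $\PSUT(R_n)$, $\PUT(R_n)$, $\PUTz(R_n)$.
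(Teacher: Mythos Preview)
The paper does not supply its own proof of this theorem: it is stated as a summary of known results, with each part attributed to the cited references. The only elaboration the paper offers is the remark that \eqref{gone4} follows from comparing the Euler--Poincar\'e characteristics of $G(4,n)$ and $\SOT(R_n^+)$ as computed in Theorem~\ref{gut}; your treatment of (d) via $\chi(G(4,n))=\tfrac{1}{2n}-\tfrac{1}{12}$ and Harder's formula for $\chi(\SOT(R_n^+))$ is exactly this argument, and matches the paper's own generalization in Section~\ref{pork}.

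Your sketches for (a) and (b) are faithful outlines of the cited literature (the denominator-exponent reduction of \cite{fgkm,s2,brs} and the determinant-image analysis of \cite[Theorem~5.3]{fgkm}), so there is nothing to compare against the present paper.

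One genuine issue: your primary route to (c) through the Bruhat--Tits tree is not the argument of \cite{rs}, and as written it is close to circular. Identifying the quotient graph of $G(4,n)$ acting on the $2$-adic tree as a single edge, with the correct vertex stabilizers $S_4$ and $D_n$, presupposes exactly the kind of structural control of $G(4,n)$ inside $\SOT(R_n^+)$ that (c) is meant to establish; for general $n$ one does not know a priori that $G(4,n)$ acts with compact quotient on that tree. The Radin--Sadun proof is the direct normal-form/ping-pong argument on $S^2$ that you mention only as an alternative: one shows that no nontrivial alternating word in $(S_4\smallsetminus D_4)$ and $(D_n\smallsetminus D_4)$ can act trivially, so the natural surjection $S_4\ast_{D_4}D_n\twoheadrightarrow G(4,n)$ is injective. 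You should promote that alternative to the main argument for (c) and drop the tree approach, or else explain independently why the $G(4,n)$-quotient of the tree is an edge without assuming (a) or Theorem~\ref{green}.
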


Serre \cite{s2} introduced Euler-Poincar\'{e} characteristics to
the study of $G(4,n)$ and $\Gn$, as well as observing that $\SOT(R_n^+)$ for $n=2^s$ acts on a tree
by looking at it over $\Q_{2}$. 
Theorem \ref{gone}\eqref{gone4} follows from computing the Euler-Poincar\'{e}
characteristic $\chi$ of $G(4,n)$ and $\SOT(R_n^+)$ for $n=2^s\geq 8$:
\begin{theorem}
\label{gut}
\textup{(Serre \cite{s2})}
Suppose $n=2^s\geq 8$.
\begin{enumerate}[\upshape (a)]
\item
\label{gut1}
$\chi(G(4,n))=-1/12 +1/2n$.

\item
\label{gut2}
$\chi(\SOT(R_n^+))=-2^{-2^{s-2}}\zeta_{F_n}(-1)$.
\end{enumerate}
\end{theorem}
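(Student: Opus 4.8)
The two parts of Theorem~\ref{gut} are best attacked separately. For part~\eqref{gut1} the plan is to feed the amalgam decomposition of Theorem~\ref{gone}\eqref{gone3} into the additivity of the Euler--Poincar\'e characteristic. Recall that $\chi$ is additive along amalgamated free products, $\chi(A\ast_C B)=\chi(A)+\chi(B)-\chi(C)$ (Mayer--Vietoris on the quotient of the Bass--Serre tree), and that $\chi(G)=1/|G|$ for a finite group $G$. Since $G(4,n)\cong S_4\ast_{D_4}D_n$ with $|S_4|=24$, $|D_4|=8$, $|D_n|=2n$, this gives at once
\[
\chi(G(4,n))=\frac1{24}+\frac1{2n}-\frac18=-\frac1{12}+\frac1{2n}.
\]
This step uses nothing beyond Theorem~\ref{gone}\eqref{gone3}, and in fact works for every $n$ with $s\geq2$, not only for $n=2^s$.

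For part~\eqref{gut2} the plan is arithmetic. First realize the $F_n$-group $\SOT$ as the adjoint group $\mathbf{B}^\times/\mathbf{G}_m$ of the totally definite quaternion algebra $B$ over $F_n$ that is split by $K_n=F_n(i)$ and ramified exactly at the $2^{s-2}$ real places (so $\SUT\cong\mathrm{SL}_1(B)$); here one must check, e.g.\ via an explicit Hilbert symbol, that $B$ is unramified at \emph{every} finite place, which is consistent with the parity constraint $2\mid 2^{s-2}$ forced by $n\geq8$. Next write $R_n^+=\OO_{F_n}[1/2]=\OO_{F_n,S}$ with $S$ the set of places above~$2$; since $2$ is totally ramified in $F_n$, $S=\{\fp\}$ is a single place with $N\fp=2$. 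Thus $\SOT(R_n^+)$ is an $S$-arithmetic subgroup of $\mathbf{B}^\times/\mathbf{G}_m$, and---$B$ being a totally definite division algebra that is split at $\fp$---it embeds as a cocompact lattice in $(\mathbf{B}^\times/\mathbf{G}_m)(F_{n,\fp})=\mathrm{PGL}_2(F_{n,\fp})$, acting with finite stabilizers on the Bruhat--Tits tree $X$, which is regular of valence $N\fp+1=3$.

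Finally, compute $\chi$ from this action by Serre's method. His formula for a group $\Gamma$ acting on a tree with finite stabilizers reads
\[
\chi(\Gamma)=\sum_{v}\frac1{|\Gamma_v|}-\sum_{e}\frac1{|\Gamma_e|},
\]
the sums running over vertices and edges of $\Gamma\backslash X$; because $X$ has constant valence $N\fp+1$, a flag count trades the edge sum for $\tfrac12(N\fp+1)$ times the vertex sum, so $\chi(\SOT(R_n^+))=\tfrac12(1-N\fp)\sum_v 1/|\Gamma_v|=-\tfrac12\sum_v 1/|\Gamma_v|$. The vertex stabilizers are the unit groups modulo scalars of the maximal orders of $B$, so, once one verifies that $\SOT(R_n^+)$ acts with a single orbit of vertices (an $S$-unit sign computation together with $h_{F_n}=1$ in the relevant range), the vertex sum equals the Eichler mass of $B$, which Eichler's mass formula evaluates, in the absence of finite ramification, as $|\zeta_{F_n}(-1)|/2^{[F_n:\Q]-1}=2^{1-2^{s-2}}\zeta_{F_n}(-1)$ (positivity holding since $[F_n:\Q]=2^{s-2}$ is even). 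Multiplying by $-\tfrac12$ yields $\chi(\SOT(R_n^+))=-2^{-2^{s-2}}\zeta_{F_n}(-1)$, as claimed. The main obstacle throughout part~\eqref{gut2} is keeping the powers of~$2$ honest: this needs the precise ramification set of $B$, the correct orbit counts of vertices and edges of $X$ (and the treatment of inversions, via barycentric subdivision if present), a faithful matching of the integral model defining $\SOT(R_n^+)$ with the unit groups of orders in $B$, and the exact normalisation---including $h_{F_n}$---in Eichler's formula. An equivalent route bypasses the tree via Harder's Gauss--Bonnet theorem, equivalently Prasad's volume formula, applied to $\mathbf{B}^\times/\mathbf{G}_m$ over $F_n$: there $2^{-2^{s-2}}$ is assembled from one factor $\tfrac12$ per real place of $F_n$ together with an $S$-arithmetic correction at $\fp$, while $\zeta_{F_n}(-1)$ is the special value of the zeta function attached to a group of type $A_1$.
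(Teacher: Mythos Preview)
For part~\eqref{gut1} your argument is exactly the one the paper gives in Theorem~\ref{mouse}: feed the amalgam $G(4,n)\cong S_4\ast_{D_4}D_n$ from Theorem~\ref{gone}\eqref{gone3} into Proposition~\ref{snore}\eqref{snore4}.

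For part~\eqref{gut2} the paper does not give its own proof; the result is quoted from Serre, and Remark~\ref{dream} only records that his argument goes through Tamagawa numbers. The paper's independent route to $\chi(\SOT(R_n^+))$ (Theorem~\ref{plants}\eqref{roast}) is genuinely different from yours: rather than attacking the adjoint group directly, it first applies Harder's formula to the simply connected form $A_1^\ast=\SUT$ to obtain $\chi(\SUT(R_n))=-M_n/2$, passes to $\PSUT(R_n)$, and then divides by the index $c_n=[\SOT(R_n^+):\PSUT(R_n)]$, which for $n=2^s$ is computed separately as $c_n=2$ via the Selmer group $\Sel_2^+(R_n^+)$ and Weber's theorem that every totally positive unit of $\OOn$ is a square (Proposition~\ref{grounded}\eqref{grounded1}, Theorem~\ref{ex}\eqref{ex1}). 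Your approach---Bruhat--Tits tree plus Eichler mass, or equivalently Prasad's volume formula applied directly to the adjoint group---is closer in spirit to Serre's original and bypasses the cokernel $\SOT(R_n^+)/\PSUT(R_n)$ entirely. The trade-off is that the paper's detour through $\SUT$ lands squarely in the setting where Harder's theorem is stated (simply connected, simple) and isolates $c_n$ as a result of independent interest, whereas your direct attack must confront precisely the issues you flag: matching the integral model of $\SOT$ with unit groups of orders in $B$, and controlling the orbit structure on the tree. One caution on the latter: your parenthetical appeals to the class number of $F_n$ being $1$, which for general $n=2^s$ is Weber's class-number conjecture and is not known. This is not fatal---the Eichler mass formula already sums over all types, so a single vertex orbit is not actually required for the computation to go through---but the sentence as written leans on an unproved statement.
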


In this paper we prove the following theorem, settling 
affirmatively a conjecture of Sarnak
\cite[p. $15^{\rm IV}$]{sa}:
\begin{theorem}
\label{green}
Suppose $4|n$ with $n\geq 8$.  
\begin{enumerate}[\upshape (a)]
\item
\label{green1}
We have  $\Gn=\UTz(R_n)$ if and only if $n=8,12,16,24$.
\item
\label{green2}
We have $\Ss\! \Gn=\SUT(R_n)$ if and only if 
$n=8, 12, 16, 24$. 
\item
\label{green3}
We have $G(4,n)=\SOT(R_n^+)$ if and only if $n=8, 12, 16, 24$.
\end{enumerate}
In all cases above where there is not equality, the index is infinite.
\end{theorem}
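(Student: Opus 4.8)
The plan is to reduce all three equalities, and the ``infinite index'' claims, to comparisons of Euler--Poincar\'e characteristics $\chi$. For the ``if'' direction let $n\in\{8,12,16,24\}$: Theorem~\ref{gone}\eqref{gone1} gives $\G_n=\UT(R_n)$ and $G(4,n)=\SOT(R_n^+)$, and Theorem~\ref{gone}\eqref{gone2}, applied with odd parts $d=1,3,1,3$ (for which $-1\in\langle 2\rangle$), gives $\UTz(R_n)=\UT(R_n)$; hence $\Gn=\UTz(R_n)$. Since $\det$ carries both $\Gn$ and $\UTz(R_n)$ onto $\langle\zeta_n\rangle$ — each contains $T_n$ — with kernels $\Ss\! \Gn$ and $\SUT(R_n)$, it follows that $\Ss\! \Gn=\SUT(R_n)$ as well.

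Now suppose $4\mid n$, $n\ge 8$ and $n\notin\{8,12,16,24\}$. Using $X=[\begin{smallmatrix}0&1\\1&0\end{smallmatrix}]\in\C$ and the identity $T_n\cdot XT_nX=\zeta_n I$ one sees $\langle\zeta_n\rangle I\subseteq\Gn$, so $\Gn$ and $\UTz(R_n)$ share the scalar subgroup $\langle\zeta_n\rangle I$ while $\Ss\! \Gn$ and $\SUT(R_n)$ share $\{\pm I\}$. Hence $\Gn=\UTz(R_n)\iff\PGn=\PUTz(R_n)$ and $\Ss\! \Gn=\SUT(R_n)\iff\Pro\Ss\! \Gn=\PSUT(R_n)$, with equal indices in each case; and since $\overline{\pi}$ is injective and $\pi(\Gn)=G(4,n)$ by Theorem~\ref{amalgam}, $\overline{\pi}$ identifies $\PGn$ with $G(4,n)$, while a short determinant count gives $[\PGn:\Pro\Ss\! \Gn]=2$. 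Thus $\PGn,\Pro\Ss\! \Gn,G(4,n)$ all have $\chi$ equal, up to a factor of $2$, to $\chi(G(4,n))$, and from Theorem~\ref{gone}\eqref{gone3} together with $\chi(A\ast_C B)=\chi(A)+\chi(B)-\chi(C)$ and $\chi(F)=1/|F|$ for finite $F$,
\[
\chi(G(4,n))=\tfrac1{24}+\tfrac1{2n}-\tfrac18=-\tfrac1{12}+\tfrac1{2n},
\]
extending Theorem~\ref{gut}\eqref{gut1} beyond $2$-powers; in particular $0<\bigl|\chi(G(4,n))\bigr|<\tfrac1{12}$.

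The substantive step is to compute $\chi(\PUTz(R_n))$, $\chi(\PSUT(R_n))$ and $\chi(\SOT(R_n^+))$ for all $n$ with $4\mid n$. Here $\SUT(R_n)$ is, modulo its centre $\{\pm I\}$, an $S$-arithmetic subgroup of $\SL_1(B)$ for the quaternion algebra $B$ over $F_n$ ramified exactly at the (real) archimedean places, so Harder's Gauss--Bonnet formula, in the form used by Serre, expresses $\chi(\SUT(R_n))$ as $\pm\zeta_{F_n}(-1)$ times a product of explicit local factors at the primes of $F_n$ above $2$, whose number and shape are governed by the decomposition of $2$ in $F_n$ and the splitting of $B$ there — ultimately by whether $-1\in\langle 2\rangle\pmod d$. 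One then has $\chi(\PUTz(R_n))=\chi(\SUT(R_n))=\tfrac12\chi(\PSUT(R_n))$, and $\chi(\SOT(R_n^+))$ is recovered by dividing by the finite, place-by-place computable index $[\SOT(R_n^+):\PSUT(R_n)]$; this generalises Theorem~\ref{gut}\eqref{gut2} to arbitrary $n$ divisible by $4$.

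With these formulas in hand the argument concludes: if, for such an $n$, any of the three indices were finite — say $[\SOT(R_n^+):G(4,n)]=m$ — then $-\tfrac1{12}+\tfrac1{2n}=\chi(G(4,n))=m\,\chi(\SOT(R_n^+))$ with $m\in\Z_{\ge1}$ (and likewise for $\PUTz(R_n)$ and $\PSUT(R_n)$), so it suffices to prove $\bigl|\chi(\SOT(R_n^+))\bigr|>\tfrac1{12}$, and the analogous bounds for $\PUTz(R_n),\PSUT(R_n)$, for every such $n$; this forces $m<1$, a contradiction, whence all three indices are infinite. For large $n$ the bound is immediate because $[F_n:\Q]\to\infty$ and, by the functional equation, $\bigl|\zeta_{F_n}(-1)\bigr|$ grows far faster than the $2$-adic local corrections can shrink; the finitely many remaining $n$ are dispatched by direct evaluation of $\zeta_{F_n}(-1)$. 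The main obstacle is precisely this last stretch: pinning down the exact local factors at the primes above $2$, together with the comparison index $[\SOT(R_n^+):\PSUT(R_n)]$ and its analogues, and converting the resulting closed formula for $\chi$ into a uniform estimate $|\chi|>\tfrac1{12}$ valid for every admissible $n$ outside the four exceptional values.
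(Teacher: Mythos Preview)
Your overall strategy—compare $\chi$ of the Clifford-cyclotomic side with $\chi$ of the arithmetic side via Harder's formula and the functional equation, then conclude infinite index when the ratio is not a positive integer—is exactly the paper's. The ``if'' direction and the computation $\chi(G(4,n))=-\tfrac1{12}+\tfrac1{2n}$ are fine.

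There are two genuine gaps, however.

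First, you explicitly stop short of the decisive step: you never establish the inequality $|\chi(\SUT(R_n))|>\tfrac1{12}-\tfrac1{2n}$ (or its projective analogues) for $n\notin\{8,12,16,24\}$. The paper does this concretely: from $|\chi(\SUT(R_n))|=2^{-[F_n:\Q]}|\zeta_{F_n}(-1)|\prod_{\fp\mid 2}|1-N\fp|$ one first drops the Euler factors at $2$ (each $|1-N\fp|\ge 1$, so they only help, contrary to your worry that ``$2$-adic local corrections can shrink''), then uses the functional equation and the exact conductor--discriminant formula for $|\Disc(F_n)|$ to get a closed lower bound $\bigl(n^{3/4}(2\pi)^{-2}\bigr)^{[F_n:\Q]}/(2\sqrt2)$, which exceeds $\tfrac1{12}$ once $n>(2\pi)^{8/3}\approx 134.5$; the remaining $n$ are checked directly. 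Without this, the argument is only an outline.

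Second, your route to part \eqref{green3} is more fragile than necessary. You propose to prove $|\chi(\SOT(R_n^+))|>\tfrac1{12}$ directly, using that $\chi(\SOT(R_n^+))=\chi(\PSUT(R_n))/c_n$ with $c_n=[\SOT(R_n^+):\PSUT(R_n)]$ ``place-by-place computable''. That index is \emph{not} a local invariant: it is the order of the image of $\phi:\SOT(R_n^+)\to\Sel_2^+(R_n^+)$, a global object built from totally positive units modulo squares and $2$-torsion in the class group, and the paper computes it only for $n=2^s$ and $n=3\cdot 2^s$. Since $c_n\ge 2$ always, dividing by $c_n$ weakens the bound you need, and for general $n$ you have no control on how large $c_n$ might be. The paper sidesteps this entirely: it first proves \eqref{green2}, deduces \eqref{green1} from the equality $[\UTz(R_n):\Gn]=[\SUT(R_n):\Ss\!\Gn]$ (both outer indices being $n$), and then obtains \eqref{green3} from \eqref{green1} via the inclusion chain $G(4,n)=\overline\pi(\PGn)\subseteq\overline\pi(\PUTz(R_n))\subseteq\SOT(R_n^+)$: once $[\PUTz(R_n):\PGn]=\infty$, the index in $\SOT(R_n^+)$ is automatically infinite, and $c_n$ never enters. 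You should reorganize \eqref{green3} this way rather than attempt the direct $\chi(\SOT(R_n^+))$ comparison.
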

\noindent We prove Theorem \ref{green} by computing Euler-Poincar\'{e}
characteristics with $4|n$, $n\geq 8$, generalizing Theorem \ref{gut}.
We prove that 
\[
\chi(\Ss\!\Gn)=\chi(G(4,n))=\chi(\Pro\Gn)
=-1/12+1/2n
\]
 in Theorem \ref{mouse}.
Then in Theorem \ref{plants} we compute $\chi$ of 
$\SUT(R_n)$, $\PSUT(R_n)$, $\PUT(R_n)$, $\PUTz(R_n)$, and $\SOT(R_n^+)$.
We gain a foothold on these Euler-Poincar\'{e} characteristics by
considering the group scheme $\SUT(\Z[1/2])$ over $\Z[1/2]$, denoted $A_1^\ast$.
We have $A_1^\ast(\R)=\SUT(\CC)$ and $A_1^\ast(R_n^+)=\SUT(R_n)$.
The results of Serre \cite{s1} (which depend on theorems of Harder)
 apply to compute $\chi(\SUT(R_n))$ 
because because $A_1^\ast$ is simply 
connected and simple. We then deduce $\chi$ of the other groups from
this using properites of Euler-Poincar\'{e} characteristics.
The relationship between $\chi(\PUT(R_n))$ and $\chi(\SOT(R_n^+))$
is particularly interesting -- it involves embedding $\PUT(R_n)$
in $\SOT(R_n^+)$ via the Adjoint representation with attendant invariant $\overline{c}(R_n)$ defined in
Definition \ref{mare}\eqref{mare1}.

\section{The special Clifford-cyclotomic group}
\label{veal}

For a complex number $z$ of absolute value $1$, define the 
unitary matrix
\begin{equation}
\label{hidden}
H(z)=\frac{1}{2}\left[\begin{array}{cc} 1+i & z(1+i)\\
\overline{z}(-1+i) & 1-i\end{array}\right]
\end{equation}
of determinant $1$.  In particular $H(1)\in\C$.  Following
\cite[(2)]{fgkm}, we take our Hadamard matrix to be 
\begin{equation}
\label{hidden2}
H:=\frac{1}{2}\left[\begin{array}{cc} 1+i & 1+i\\
1+i & -1-i\end{array}\right]\in\C.
\end{equation}
We have $
H=T_{4}^{-1}H(1)$
with $T_n\in\UT(R_n)$ as in \eqref{snow}
and 
$
T_n^{-j}H(1)T_n^j=H(\zeta_n^j)\in\SUT(R_n)
$
for integers $j$ if $4|n$.  With $4|n$, set
\begin{equation}
\label{valley2}
\sH_n:= \langle H(\zeta_n), H(\zeta_n^2), \ldots , H(\zeta_n^{n-1}),
H(\zeta_{n}^n)=H(1)\rangle \leq \Ss\!\Gn\leq \SUT(R_n).
\end{equation}
\begin{proposition}
\label{valley3}
Assuming $4|n$, we have
\begin{enumerate}[\upshape (a)]
\item
\label{valley31}
$[\Gn:\Ss\!\Gn]=n$,
\item
\label{valley32}
$\Gn=\langle H, T_n\rangle = \langle H(1), T_n\rangle$,
\item
\label{valley33}
$\Ss\!\Gn=\sH_n$.
\end{enumerate}
\end{proposition}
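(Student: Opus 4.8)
The plan is to prove \eqref{valley32} first, then deduce \eqref{valley31}, and finally obtain \eqref{valley33} by combining the two via a semidirect-product argument.

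\emph{Part \eqref{valley32}.} Since $4\mid n$ we have $\zeta_n^{n/4}=\zeta_4=i$, so $T_4=T_n^{n/4}\in\langle T_n\rangle$; combined with $H=T_4^{-1}H(1)$ this gives $\langle H,T_n\rangle=\langle H(1),T_n\rangle$ at once. It then remains to match this group with $\Gn=\langle\C,T_n\rangle$, for which I would invoke the standard exact-synthesis description of the single-qubit Clifford group, namely $\C=\UT(R_4)=\langle H,T_4\rangle$ (equivalently $\langle H(1),T_4\rangle$), as recorded in \cite{fgkm}. Granting this, $\Gn=\langle\C,T_n\rangle=\langle H,T_4,T_n\rangle=\langle H,T_n\rangle$, using $T_4\in\langle T_n\rangle$ once more.

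\emph{Part \eqref{valley31}.} The determinant restricts to a surjection $\Gn\twoheadrightarrow\det(\Gn)$ with kernel $\Gn\cap\SUT(R_n)=\Ss\!\Gn$, so $[\Gn:\Ss\!\Gn]=\#\det(\Gn)$. Because $\Gn\subseteq\UTz(R_n)$ we have $\det(\Gn)\subseteq\langle\zeta_n\rangle$, while $\det T_n=\zeta_n$ forces the reverse inclusion; hence $\det(\Gn)=\langle\zeta_n\rangle$, a cyclic group of order $n$.

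\emph{Part \eqref{valley33}.} The inclusion $\sH_n\subseteq\Ss\!\Gn$ is already in \eqref{valley2} (each $H(\zeta_n^j)=T_n^{-j}H(1)T_n^j$ lies in $\Gn$ by \eqref{valley32} and has determinant $1$). For the reverse inclusion, the key observation is that conjugation by $T_n$ sends $H(\zeta_n^j)$ to $T_n^{-(j+1)}H(1)T_n^{j+1}=H(\zeta_n^{j+1})$, so it cyclically permutes the generating set $\{H(\zeta_n^j)\}_{j\bmod n}$ of $\sH_n$; thus $T_n$ normalizes $\sH_n$. Since also $H(1)\in\sH_n$ and $\Gn=\langle H(1),T_n\rangle$ by \eqref{valley32}, it follows that $\sH_n\trianglelefteq\Gn$ and that the subgroup $\sH_n\langle T_n\rangle$, containing both generators of $\Gn$, equals $\Gn$. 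Moreover $\sH_n\cap\langle T_n\rangle=\{1\}$, since an element $T_n^k$ of determinant $1$ has $\zeta_n^k=1$ and hence is trivial; therefore $\Gn=\sH_n\rtimes\langle T_n\rangle$ and $[\Gn:\sH_n]=\#\langle T_n\rangle=n$. Comparing with \eqref{valley31}, the tower $\sH_n\subseteq\Ss\!\Gn\subseteq\Gn$ satisfies $[\Gn:\sH_n]=[\Gn:\Ss\!\Gn]=n<\infty$, which forces $[\Ss\!\Gn:\sH_n]=1$, i.e. $\sH_n=\Ss\!\Gn$.

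The only ingredient that is not elementary group theory is the Clifford-group generation statement $\C=\langle H,T_4\rangle$ used in \eqref{valley32}; this is where the real content lies, and it is the step I would cite rather than reprove. If one insisted on a self-contained argument, it would be recovered by checking that the images of $H$ and $T_4$ generate $\PUT(R_4)\cong S_4$ and that the central scalar $iI=H^2$ supplies the full center $\{\pm1,\pm i\}$ of $\UT(R_4)$, so that $\#\langle H,T_4\rangle=96=\#\UT(R_4)$. Everything else — the determinant exact sequence, the semidirect decomposition $\Gn=\sH_n\rtimes\langle T_n\rangle$, and the index comparison closing \eqref{valley33} — is routine.
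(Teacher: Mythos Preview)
Your proof is correct. Parts \eqref{valley32} and \eqref{valley31} match the paper's approach: the paper also cites \cite{fgkm} for \eqref{valley32} and uses the determinant sequence \eqref{valley4} for \eqref{valley31}, essentially as you do (you are in fact slightly more explicit about why $\det(\Gn)=\langle\zeta_n\rangle$).

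For \eqref{valley33} you take a genuinely different route. The paper argues directly by induction on the number of occurrences of $H(1)$ in a word $w\in\Ss\!\Gn$ written in the alphabet $\{H(1),T_n\}$: if $w$ starts $T_n^aH(1)\cdots$, then $H(\zeta_n^{-a})^{-1}w$ has one fewer $H(1)$, and one concludes $w\in\sH_n$. Your argument is more structural: you observe that $T_n$ normalizes $\sH_n$ (by cyclically permuting its generators), deduce $\Gn=\sH_n\rtimes\langle T_n\rangle$, and then close by comparing the indices $[\Gn:\sH_n]=n=[\Gn:\Ss\!\Gn]$. The inductive rewriting in the paper is essentially an explicit normal-form witness for the same semidirect decomposition you exhibit; conversely, your normality observation is exactly what makes the paper's inductive peel-off step work. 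Your version has the advantage of yielding the decomposition $\Gn\cong\sH_n\rtimes(\Z/n\Z)$ as a byproduct, while the paper's argument is closer in spirit to an exact-synthesis algorithm.
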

\begin{proof}
\eqref{valley31} follows from the exact sequence
\begin{equation}
\label{valley4}
1\rightarrow \SUT(R_n)\rightarrow \UTz(R_n)\stackrel{\det}{\rightarrow}
\langle \zeta_n\rangle \rightarrow 1,
\end{equation}
since the roots of unity in $R_n$ are $\langle\zeta_n\rangle$ as
$n$ is even.

\eqref{valley32} is shown in \cite[Section 2.2]{fgkm}.

For \eqref{valley33}, let $w$ be a word in $H(1)$ and $T_n$ of determinant $1$ with 
$k$ occurences of 
$H(1)$.  We proceed  by induction on $k$.  If $k=0$,
then the word must be $1$.  If $k=1$, the word must be
$T_n^{-j} H(1) T_n^j = H(\zeta_n^j)$ for some $0\leq j\leq n$.  
Suppose inductively that every word in
$H(1)$ and $T_n$ of determinant $1$ with at most $k_0$ occurences of $H(1)$
is in $\sH_n$, and let $w$ be a word in which $H(1)$ appears $k_0+1$ times.
Choose $a$ with $0 \le a < n$ such that $w$ begins with $T_n^aH(1)$.  
Then $H(\zeta_n^{-a})^{-1}w\in \Ss\!\Gn$ 
has at most $k_0$ occurences of $H(1)$, and so is in $\sH_n$ by assumption.
Hence $w\in \sH_n$ and $\Ss\!\Gn=\sH_n$.
\end{proof}

\begin{theorem}
\label{mountain}
Assume $4|n$.  Then $\UTz(R_n)=\Gn$ if and only if $\SUT(R_n)=\Ss\!\Gn$.
\end{theorem}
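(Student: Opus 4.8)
The plan is to compare the two determinant sequences attached to $\UTz(R_n)$ and to $\Gn$. Sequence \eqref{valley4} already identifies $\SUT(R_n)$ as the kernel of $\det\colon\UTz(R_n)\to\langle\zeta_n\rangle$. Since $T_n\in\Gn$ has $\det T_n=\zeta_n$, the map $\det$ restricted to $\Gn$ is still surjective onto $\langle\zeta_n\rangle$, and its kernel is by definition $\Ss\!\Gn=\Gn\cap\SUT(R_n)$. Thus we have a commutative ladder of short exact sequences with equal quotients $\langle\zeta_n\rangle$ and vertical inclusions $\Ss\!\Gn\hookrightarrow\SUT(R_n)$ and $\Gn\hookrightarrow\UTz(R_n)$.

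For the forward implication, suppose $\UTz(R_n)=\Gn$. Since every scalar $1$ is a root of unity we have $\SUT(R_n)\subseteq\UTz(R_n)$, so
\[
\SUT(R_n)=\SUT(R_n)\cap\UTz(R_n)=\SUT(R_n)\cap\Gn=\Ss\!\Gn,
\]
which is the claim. For the reverse implication, suppose $\Ss\!\Gn=\SUT(R_n)$ and let $\gamma\in\UTz(R_n)$. Then $\det\gamma=\zeta_n^{\,j}$ for some integer $j$, so $\det(\gamma T_n^{-j})=1$, i.e.\ $\gamma T_n^{-j}\in\SUT(R_n)=\Ss\!\Gn\subseteq\Gn$. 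As $T_n\in\Gn$, we conclude $\gamma=(\gamma T_n^{-j})T_n^{\,j}\in\Gn$, hence $\UTz(R_n)=\Gn$.

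There is no serious obstacle here: the argument is the five lemma applied to the ladder above, the only point requiring a remark being the surjectivity of $\det$ on $\Gn$, which is immediate from $T_n\in\Gn$ together with the fact (used already in the proof of Proposition~\ref{valley3}\eqref{valley31}) that the roots of unity in $R_n$ are exactly $\langle\zeta_n\rangle$ because $n$ is even.
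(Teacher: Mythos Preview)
Your proof is correct and follows essentially the same approach as the paper: both directions are handled identically, using $T_n\in\Gn$ with $\det T_n=\zeta_n$ to adjust determinants and reduce to the special unitary case. Your added framing via the ladder of short exact sequences and the five lemma is a helpful perspective but does not change the underlying argument.
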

\begin{proof}
First, suppose that $\SUT(R_n)=\sH_n$ and let
$\alpha\in\UTz(R_n)$.  Let $\det\alpha =\zeta_n^j$, where
$0\leq j<n$.  Then $\alpha=T^j\alpha '$, where $\det\alpha'=1$ and
so $\alpha '\in\SUT(R_n)$.  Since the generators of 
$\SUT(R_n)$ belong to $\langle H, T_n\rangle$, it follows that
$\alpha$ does too.

In the other direction, suppose that $\UTz(R_n)=\Gn$.  Then
$\SUT(R_n)=\Ss\!\Gn$ trivially by definition.
\end{proof}

\section{ \texorpdfstring{$\SUT(R_n)$}{SU2(Rn)} and 
\texorpdfstring{$\SOT(R_n^+)$}{SO3(Rn+)}}
\label{hind}

\begin{definition1}
\label{testy}
\begin{enumerate}[\upshape (a)]
\item
\label{testy1}
Throughout this paper $R^+$ is the ring of $S$-integers in a
totally real number field $F$, where $S$ contains the archimedean
places and all places above $2$. We put $R=R^+[i]$  and $K=F(i)$. Both $R^+$
and $R$ are Dedekind domains.
\item
\label{testy2}
Define $A_1^*$ to be the group scheme over $\Z[1/2]$  with
$$A_1^*(B) =\left\{ \left[\begin{matrix} a + bi & c+ di \\ -c + di & a-bi \end{matrix}\right] : a^2 + b^2 + c^2 +d^2 = 1;\,\, a,b,c,d\in B\right\}$$
for any $\Z[1/2]$-algebra $B$ with group operation defined by 
matrix multiplication.  In particular, 
$A_1^\ast(B)=\SUT(B[i])$.
For example, $A_1^*(\R) = \SUT(\CC)$. 
\end{enumerate}
\end{definition1}

By $\SOT$ we mean the group of $3\times 3$ matrices of determinant $1$ that stabilize the standard inner product on $\R^3$. 
It is defined as a group scheme over $\Z$ by $\det(g) =1$ and $gg^t=1$.  
There is an exact sequence of group schemes
\begin{equation}\label{ses}1 \rightarrow \Bmu_2 \rightarrow A_1^* \xrightarrow{\Ad} \SOT \rightarrow 1\, ,\end{equation}
given by $\SUT$ acting by conjugation on the three-dimensional real vector space $V$ of trace-$0$ $2\times 2$ hermitian ($m^\dagger = m$) matrices in the {\it Pauli basis} 
$$ \sigma_x = \left[\begin{array}{ll}0 & 1\\ 1 & 0\end{array}\right], \sigma_y = \left[\begin{array}{rr}0 & -i\\ i & 0\end{array}\right],\sigma_z = \left[\begin{array}{cc}1 & 0\\ 0 & -1\end{array}\right].$$ In terms of hermitian matrices the standard form is $\langle A, B\rangle =  \frac{1}{2} \Tr(AB)$:
$$\frac{1}{2}\Tr\left( \left[\begin{matrix} z & x-iy\\x+iy & -z \end{matrix}\right]^2\right) = x^2 + y^2 +z^2 ,$$
which is obviously preserved under conjugation by $\SUT$. This is the Adjoint action of $\SUT$ on its Lie algebra $iV$ of trace-$0$ skew-hermitian matrices in disguise.
Explicitly, we have \cite[Appendix A]{na}\begin{small}
\begin{equation}
\label{pi} 
\Ad  \left(\left[\begin{array}{cc} a + bi & c+ di \\ -c + di &
  a-bi \end{array}\right]\right)=\left[\begin{array}{ccc}a^2 -b^2 -c^2 + d^2 &
  2ab +2cd & -2ac +2bd \\ -2ab +2cd & a^2 -b^2 +c^2 -d^2 & 2ad +2bc
  \\ 2ac +2bd & -2ad + 2bc & a^2+b^2-c^2-d^2 \end{array} \right] .
\end{equation} \end{small}

The map $\Ad$ factors as 
\begin{equation}
\label{crow}
A_1^*(R^+)=\SUT(R) \twoheadrightarrow \PSUT(R) \hookrightarrow \SOT(R^+).
\end{equation}
The adjoint action $\Ad$ given in \eqref{pi} extends to a group homomorphism $\pi:\UT(R)\rightarrow \SOT(R^+)$
via conjugation on the trace-$0$ $2\times2$ hermitian matrices in the Pauli basis.  We have 
\[
\pi(g)=\Ad (\frac{1}{\sqrt{\det g}}g)
\]
for an arbitrary choice of $\sqrt{\det g}$.
The map $\pi$ in turn factors as
\begin{equation}
\label{finch}
\UT(R)\twoheadrightarrow \PUT(R)\stackrel{\overline{\pi}}{\hookrightarrow}\SOT(R^+).
\end{equation}
We view $\PSUT(R)$ as a subgroup of $\SOT(R^+)$ via \eqref{crow}
and we view $\PUT(R)$ as a subgroup of $\SOT(R^+)$ via \eqref{finch} with
$\PSUT(R)\leq \PUT(R)\leq \SOT(R^+)$.  
\begin{remark1}
\label{robin}
{\rm
In Section \ref{orange} we will define a map $\phi$ from $\SOT(R^+)$
into a finite elementary abelian $2$-group (the Selmer group $\Sel_2^+(R^+)$)
with kernel $\PSUT(R)$.  From this it follows that $\PSUT(R)$ and 
$\PUT(R)$ are {\em normal} subgroups of $\SOT(R^+)$, cf. Corollary \ref{foal}.
}
\end{remark1}

\section{$\SOT(R^+)/\PSUT(R)$ and $\SOT(R^+)/\PUT(R)$}
\label{orange}

The short exact sequence (\ref{ses}) remains short exact on $\R$-points 
$$1 \rightarrow \langle \pm 1\rangle \rightarrow A_1^*(\R) \xrightarrow{\Ad} \SOT(\R) \rightarrow 1$$
with $A_1^*(\R) = \SUT(\CC)$, but in general for $R^+$ 
we only have
\begin{equation}
\label{bud}
1 \rightarrow \Bmu_2(R^+)=\langle\pm 1\rangle  \rightarrow A_1^*(R^+) \xrightarrow{\Ad} \SOT(R^+)\, .
\end{equation}
In our situation $A_1^*(R^+)$ does not surject onto $\SOT(R^+)$.  In particular the map $\Ad$ factors as  (see the next section) and the map from $\SUT(R)$ to $\PUT(R)$ is not
surjective.  Indeed, for us $R$ is a localization of an order in a number field,
so the group of roots of unity of $R$ is finite, generated by some
root of unity
$\zeta$.  Then $\zeta$ is not a square in $R$, so
$\left[\begin{smallmatrix}\zeta&0\\0&1\end{smallmatrix}\right]$ is an element of $\PUT(R)$ whose
  determinant is not a square.  Therefore it cannot be the image of any element of
  $\SUT(R)$.  Since the map $\PUT(R) \to \SOT(R^+)$ is injective, this implies
  that $\SUT(R) \to \SOT(R^+)$ is not surjective either, proving the following proposition.
\begin{proposition}
\label{dint}
Let $R^+$ be the $S$-integers in a toally real field $F$, where
$S$ contains the archimedean primes and all primes above $2$, and 
let $R=R^+[i]$. Then the group $\SOT(R^+)/\PSUT(R)$ is nontrivial.
\end{proposition}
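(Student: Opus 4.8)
The plan is to exhibit an explicit element of $\SOT(R^+)$ that cannot lie in $\PSUT(R)$, using the determinant obstruction already sketched in the paragraph preceding the statement. First I would recall from Definition~\ref{testy}\eqref{testy1} that $R = R^+[i]$ is a localization of an order in a number field, so its group of roots of unity $\mu(R)$ is finite cyclic, generated by some root of unity $\zeta$; since $1/2 \in R$, note $\zeta$ has even order, hence $-1 \in \langle \zeta^2\rangle$ would force $\zeta$ itself to be a square, which it is not (a generator of a finite cyclic group of even order is never a square in that group, and any square root in $R$ would again be a root of unity). Thus $\zeta \notin (R^\times)^2$.

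Next I would consider the diagonal matrix $g_\zeta = \left[\begin{smallmatrix}\zeta & 0\\ 0 & 1\end{smallmatrix}\right]$. This lies in $\UT(R)$ since it is unitary with entries in $R$, so its image $\overline{g_\zeta} \in \PUT(R)$ is well defined, and via \eqref{finch} we regard it inside $\SOT(R^+)$; concretely $\pi(g_\zeta) = \Ad\bigl(\tfrac{1}{\sqrt\zeta} g_\zeta\bigr)$ is a genuine element of $\SOT(R^+)$. Now suppose for contradiction that $\pi(g_\zeta) \in \PSUT(R)$. Then by \eqref{crow}, $\pi(g_\zeta)$ is the image under $\Ad$ of some $h \in \SUT(R)$, and since both $h$ and $\tfrac{1}{\sqrt\zeta}g_\zeta$ map to $\pi(g_\zeta)$ in $\SOT$ whose kernel over $R^+$ is $\langle \pm 1\rangle$ by \eqref{bud}, we get $h = \pm \tfrac{1}{\sqrt\zeta} g_\zeta$, so $g_\zeta = \pm \sqrt\zeta\, h$. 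Taking determinants gives $\zeta = (\pm\sqrt\zeta)^2 \det h = \zeta \cdot \det h$, hence $\det h = 1$ — fine — but more usefully, reading the equation $g_\zeta = \pm\sqrt\zeta\, h$ at the level of $\GLT(R)$: the $(2,2)$ entry says $1 = \pm \sqrt\zeta\cdot h_{22}$, so $\sqrt\zeta = \pm h_{22}^{-1} \in R$, contradicting $\zeta \notin (R^\times)^2$. Therefore $\pi(g_\zeta) \notin \PSUT(R)$, so $\SOT(R^+)/\PSUT(R)$ is nontrivial.

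I should double-check the one subtle point: that $\pi(g_\zeta)$ really is a nontrivial coset, i.e.\ that the argument does not secretly show $\pi(g_\zeta)$ is trivial in $\SOT$. But $\pi$ factors through the injection $\overline\pi$ in \eqref{finch}, and $g_\zeta$ is noncentral in $\UT(R)$ (it is not scalar since $\zeta \ne 1$), so its image in $\PUT(R)$ is nontrivial and hence so is $\pi(g_\zeta) \in \SOT(R^+)$; this shows the element is a legitimate witness, though for mere nontriviality of the quotient it would already suffice that $\pi(g_\zeta) \notin \PSUT(R)$ regardless.

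The main obstacle is essentially bookkeeping rather than mathematics: one must be careful about the square-root ambiguities and the distinction between working in $\UT(R)$, in $\PUT(R)$, and inside $\SOT(R^+)$ via $\pi$, and make sure the determinant/entry argument is run at the right level (in $\GLT(R)$, before projectivizing) so that the conclusion $\sqrt\zeta \in R$ is clean. Everything else — the structure of $\mu(R)$, the exactness of \eqref{bud}, the factorizations \eqref{crow} and \eqref{finch} — is already in hand from the preceding sections.
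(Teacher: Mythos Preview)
Your proof is correct and follows essentially the same approach as the paper: both exhibit $\pi(g_\zeta)$, where $g_\zeta = \left[\begin{smallmatrix}\zeta&0\\0&1\end{smallmatrix}\right]$, as an element of $\SOT(R^+)\setminus\PSUT(R)$ by arguing that a preimage in $\SUT(R)$ would force $\zeta$ to be a square in $R$. The paper reads off the obstruction via the determinant (any lift $g_\zeta=\lambda h$ with $h\in\SUT(R)$, $\lambda\in\mu(R)$ gives $\zeta=\lambda^2$) while you read off a matrix entry, but it is the same computation; one small slip is that the even order of $\zeta$ follows from $-1\in R$ (or $i\in R$), not from $1/2\in R$.
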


Even the map $\PUT(R) \to \SOT(R^+)$ may not be surjective.
\begin{example1}
\label{dreary}
The map
$\PUT(\Z[\sqrt{21}, i,1/2] )\hookrightarrow \SOT(\Z[\sqrt{21}, 1/2])$ 
is not surjective.\\
{\rm
Let $R^+=\Z[\sqrt{21},1/2]$ and $R=R^+[i]$.
Let $u=\frac{5+\sqrt{21}}{2}\in (R^+)^\times$ which is totally positive
and {\em not} the norm of a unit in $R$. (One checks that $R^\times$ is generated
by $u ,i, 1+i$ and hence that $u$ is not a norm from $R^\times$.) Choose 
$q\in\left(\frac{-1,-1}{R^+}\right)$ of norm $u$, such as
$\frac{4+\sqrt{21}+i+j+k}{4}$. The homomorphism from the unit Hamilton
quaternions over $R^+$  to $\SOT$ takes $q$ to $$
T_q=\left[\begin{array}{ccc}
\frac{\sqrt{21}+3}{8} & \frac{1}{4} & \frac{-\sqrt{21}+3}{8}\\[.045in]
\frac{-\sqrt{21}+3}{8} & \frac{\sqrt{21}+3}{8} &\frac{1}{4}\\[.045in]
\frac{1}{4} & \frac {-\sqrt{21}+3}{8} & \frac{\sqrt{21}+3}{8}
\end{array}\right]\in\SOT(R^+).
$$
The $2\times 2$ matrix $M$ corresponding to $q$ is
$$
M_q=\left[\begin{array}{cc} \frac{4+\sqrt{21}+i}{4}&
\frac{1+i}{4}\\[.045in]
\frac{-1+i}{4} & \frac{4+\sqrt{21}-i}{4}
\end{array}\right];
$$
it has the property that $M M^{\dagger}=u\,{\rm Id}_{2\times 2}$.
The element of $\PUT(\CC)$ mapping to $T_q$ is obtained by
dividing $M_q$ by an element of $\CC$ of norm $u$.  However,
lifting this element to an element of $\PUT(R)$ would require
finding an element of $R$ of norm $u$, which does not exist.
Hence $T_q\in\SOT(R^+)$ is not the image of any element of 
$\PUT(R)$.
}
\end{example1}

In this section we will prove that $\SOT(R^+)/\PSUT(R)$ and $\SOT(R^+)/\PUT(R)$ are finite
abelian $2$-groups, with $\SOT(R^+)/\PSUT(R)$ nontrivial by Proposition \ref{dint}.
Denote by $F_+$ the totally positive elements of $F$. For any
subset $S\subseteq F$, denote by $S_+\subseteq F_+$
the totally positive elements of $S$.

\begin{definition1}
\label{eel}
{\rm
 Let $M_x, M_y, M_z \in \SOT(R^+)$ be the diagonal matrices with entries
  $$(1,-1,-1), (-1,1,-1), (-1,-1,1)$$ respectively.
  
Define that following $R^+$-valued functions for $M\in\SOT(R^+)$:
\begin{align*}
\phi_1(M)&:=(1+M_{11}+M_{22}+M_{33})/4=(1+\Tr(M))/4\\
\phi_2(M)&:=(1-M_{11}-M_{22}+M_{33})/4=(1+\Tr(MM_z))/4\\
\phi_3(M)&:=(1-M_{11}+M_{22}-M_{33})/4=(1+\Tr(MM_y))/4\\
\phi_4(M)&:=(1+M_{11}-M_{22}-M_{33})/4=(1+\Tr(MM_x))/4
\end{align*}
and
\begin{align*}
\theta_{12}(M):=\theta_{21}(M)&:=(M_{12}-M_{21})/4\\
\theta_{13}(M):=\theta_{31}(M)&:=(M_{31}-M_{13})/4\\
\theta_{14}(M):=\theta_{41}(M)&:=(M_{23}-M_{32})/4\\
\theta_{34}(M):=\theta_{43}(M)&:=(M_{12}+M_{21})/4\\
\theta_{24}(M):=\theta_{42}(M)&:=(M_{31}+M_{13})/4\\
\theta_{23}(M):=\theta_{32}(M)&:=(M_{23}+M_{32})/4\\
\end{align*}
}
\end{definition1}
\begin{definition1}
\label{owl}
{\rm
Let $\sf{R}$ be the $S$-integers in a totally real number field $F$.
Define the Selmer group
\[
\Sel^+_2({\sf{R}}):=\{x\in F_+^\times\mid \val_{\fp}x\equiv 0\pmod{2}\text{ for every finite prime $\fp$ of $\sf{R}$}\}/(F^\times)^2.
\]
We denote by $\Cl({\sf R})$ the class group of $\sf{R}$.
}
\end{definition1}
It is not difficult to compute $\Sel^+_2({\sf R})$ in examples using the following elementary proposition.
\begin{proposition}
\label{hawk}
There is an exact sequence of abelian groups
\[
{\sf R}^\times\stackrel{2}{\rightarrow} {\sf R}^\times_+
\rightarrow \Sel_2^+({\sf R})\rightarrow \Cl({\sf R})\stackrel{2}{\rightarrow} \Cl({\sf R}).
\]
\end{proposition}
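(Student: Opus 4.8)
The plan is to produce the four-term exact sequence directly from the definitions by hooking $\Sel_2^+(\sf R)$ onto a Kummer-style sequence. First I would observe that, since $\sf R$ is a Dedekind domain, there is the standard exact sequence
\[
1\rightarrow {\sf R}^\times \rightarrow F^\times \xrightarrow{\mathrm{div}} \mathrm{Div}({\sf R}) \rightarrow \Cl({\sf R})\rightarrow 1,
\]
where $\mathrm{Div}({\sf R})=\bigoplus_{\fp}\Z$ is the group of fractional ideals. The group $V:=\{x\in F^\times\mid \val_\fp x\equiv 0\ (2)\text{ for all }\fp\}/(F^\times)^2$ is exactly the kernel of the map $F^\times/(F^\times)^2\to \mathrm{Div}({\sf R})/2\,\mathrm{Div}({\sf R})$ induced by $\mathrm{div}$, so a short diagram chase (or the snake lemma applied to multiplication by $2$ on the displayed sequence) gives
\[
1\rightarrow {\sf R}^\times/({\sf R}^\times)^2 \rightarrow V \rightarrow \Cl({\sf R})[2]\rightarrow 1.
\]
Then $\Sel_2^+({\sf R})$ is the subgroup of $V$ represented by totally positive elements, i.e.\ the kernel of the sign map $V\to (F\otimes\R)^\times/(F\otimes\R)^{\times}_{>0}$.

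Next I would assemble the desired four-term sequence. Writing $\iota\colon {\sf R}^\times_+/({\sf R}^\times)^2\to \Sel_2^+({\sf R})$ for the map induced by inclusion of totally positive units, exactness at $\Sel_2^+({\sf R})$ says that an element of $\Sel_2^+({\sf R})$ maps to $0$ in $\Cl({\sf R})[2]$ precisely when its associated ideal is principal, say $(x)$ with $x$ totally positive after adjusting by a unit — so it lies in the image of $\iota$; this is exactly the restriction to totally positive classes of the surjection $V\twoheadrightarrow$ (its image in $\Cl({\sf R})[2]$), and one checks the image in $\Cl({\sf R})[2]$ is all of it because any ideal class of order dividing $2$ has a totally positive generator of its square. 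Exactness at ${\sf R}^\times_+/({\sf R}^\times)^2$: the kernel of $\iota$ consists of totally positive units that become squares in $F^\times$, hence are squares of units (a unit which is a square in $F^\times$ is the square of a unit), i.e.\ the image of ${\sf R}^\times\xrightarrow{2}{\sf R}^\times_+$. Finally the last map $\Cl({\sf R})\xrightarrow{2}\Cl({\sf R})$ has kernel $\Cl({\sf R})[2]$, matching the cokernel term, so the sequence
\[
{\sf R}^\times\xrightarrow{2}{\sf R}^\times_+\rightarrow\Sel_2^+({\sf R})\rightarrow\Cl({\sf R})\xrightarrow{2}\Cl({\sf R})
\]
is exact at every spot.

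The one genuinely non-formal point — and the step I expect to be the main obstacle — is the surjectivity of $\Sel_2^+({\sf R})\to\Cl({\sf R})[2]$, equivalently the claim that every $2$-torsion ideal class contains an ideal whose square is generated by a \emph{totally positive} element. Given a fractional ideal $\mathfrak a$ with $\mathfrak a^2=(\alpha)$, one has $\alpha\in V$ up to squares, but $\alpha$ need not be totally positive; the fix is to multiply $\alpha$ by a suitable unit to make it totally positive, which requires knowing that the signs of units at the real places of $F$, together with $\pm$ squares, generate enough of $(\pm1)^{[F:\Q]}$ — but this can fail, and that failure is precisely measured by $\Sel_2^+({\sf R})$ not being onto. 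So in fact the correct statement is exactness as written \emph{without} claiming the last arrow's analog is surjective: the cokernel of ${\sf R}^\times_+/({\sf R}^\times)^2\to\Sel_2^+({\sf R})$ injects into $\Cl({\sf R})[2]$ but need not fill it, which is consistent with the sequence as stated (it only asserts exactness at the middle two terms and that the final map is multiplication by $2$). Thus the proof reduces to: (i) the Kummer sequence for $\mathrm{div}$ above; (ii) intersecting with the totally positive cone and checking that this intersection is compatible with the maps; (iii) the elementary fact that a unit that is a square in $F^\times$ is a square of a unit. I would present (i)–(iii) in that order, with the sign-cone bookkeeping in (ii) being the only place that needs care.
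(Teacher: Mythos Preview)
The paper does not prove this proposition; it is stated as ``elementary'' and the text passes directly to its consequences. Your approach---the divisor sequence $1\to{\sf R}^\times\to F^\times\to\mathrm{Div}({\sf R})\to\Cl({\sf R})\to 1$, snake lemma for multiplication by~$2$, then intersection with the totally positive cone---is the standard one, and your verification of exactness at ${\sf R}^\times_+$ and at $\Sel_2^+({\sf R})$ is correct.

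Your treatment of exactness at the first $\Cl({\sf R})$, however, is internally inconsistent. In the second paragraph you assert that ``any ideal class of order dividing $2$ has a totally positive generator of its square'', which would give surjectivity of $\Sel_2^+({\sf R})\to\Cl({\sf R})[2]$; in the third paragraph you (correctly) observe that this can fail; and you then try to reconcile the two by saying the displayed sequence ``only asserts exactness at the middle two terms''. That last claim is a misreading: a five-term sequence $A\to B\to C\to D\to E$ declared exact is, by convention, exact at $B$, $C$, and $D$, so exactness at the first $\Cl({\sf R})$ \emph{is} being asserted, and it is equivalent to the surjectivity you doubt. Your doubt is well founded. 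For $F=\Q(\sqrt{34})$ the fundamental unit $35+6\sqrt{34}$ has norm $+1$, the class number is $2$, and $\fp_3^2=(5+\sqrt{34})$; since every unit of $\Z[\sqrt{34}]$ is either totally positive or totally negative, no unit multiple of $5+\sqrt{34}$ is totally positive, so $[\fp_3]\in\Cl({\sf R})[2]$ is not in the image of $\Sel_2^+({\sf R})$. Inverting $2$ does not help, as the prime above $2$ is principal with totally positive generator $6+\sqrt{34}$. The clean statement replaces $\Cl({\sf R})$ by the narrow class group $\Cl^+({\sf R})$, after which your argument (i)--(iii) goes through verbatim. This subtlety is invisible in the paper's applications (Proposition~\ref{grounded}), where the relevant class groups have odd order and hence trivial $2$-torsion.
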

In particular, let $r=[F:\Q]$ and let $s$ be the number of finite
primes in $S$.
Then the kernel of the signature map ${\sf R}^\times \to (\Z/2\Z)^{r}$
is precisely ${\sf R}^\times_+$, while
${\sf R}^\times/({\sf R}^\times)^2 \cong (\Z/2\Z)^{r+s}$.  Thus if the
image of the signature map is isomorphic to $(\Z/2\Z)^v$, then
${\sf R}^\times_+/({\sf R}^\times)^2 \cong (\Z/2\Z)^{r+s-v}$.

This makes it straightforward to compute the following examples:
\begin{proposition}
\label{grounded}
Let $R_n$, $R_n^+$ be as in the introduction.
\begin{enumerate}[\upshape (a)]
\item
\label{grounded1}
Suppose $n=2^s$, $n\geq 8$.
Then $\Sel_2^+(R_n^+)\cong \Z/2\Z$.
\item
\label{grounded2}
Suppose $n=3\cdot 2^s$, $4|n$.
Then $\Sel_2^+(R_n^+)\cong \Z/2\Z$.
\end{enumerate}
\end{proposition}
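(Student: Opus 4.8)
The plan is to apply Proposition \ref{hawk} to the rings $R_n^+ = \Z[\zeta_n + \overline{\zeta_n}, 1/2]$ for the two families $n = 2^s$ and $n = 3 \cdot 2^s$, reducing everything to a computation of units, signatures, and class numbers. First I would record that both fields $F_n = \Q(\zeta_n + \overline{\zeta_n})$ have class number one in the relevant range (for $n = 2^s$ this is the maximal real subfield of a $2$-power cyclotomic field, and $h^+ = 1$ is classical for these; for $n = 3 \cdot 2^s$ one uses the known tables of class numbers of real cyclotomic fields, all of which are $1$ in the range where $R_n^+$ is what it is — and in any case $\Cl(R_n^+)$ is a quotient of $\Cl(\OO_{F_n})$). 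Since $2$ is totally ramified (for $n=2^s$) or has a single prime above it up to the Galois action in a controlled way (for $n = 3\cdot 2^s$, where $2$ splits into primes permuted transitively by $\mathrm{Gal}$), inverting $2$ does not change the (trivial) class group. Hence in Proposition \ref{hawk} the map $\Cl(R_n^+) \xrightarrow{2} \Cl(R_n^+)$ is the zero map on the trivial group, and the exact sequence collapses to $\Sel_2^+(R_n^+) \cong R_{n,+}^\times / (R_n^\times)^2$.

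Next I would invoke the paragraph following Proposition \ref{hawk}: with $r = [F_n : \Q]$ and $s'$ the number of finite primes of $R_n^+$ (i.e. the primes above $2$), one has $R_n^\times/(R_n^\times)^2 \cong (\Z/2\Z)^{r + s'}$ and $R_{n,+}^\times/(R_n^\times)^2 \cong (\Z/2\Z)^{r + s' - v}$, where $v$ is the rank of the image of the signature map $R_n^\times \to (\Z/2\Z)^r$. So the entire computation reduces to determining $v$ in each case. For $n = 2^s \geq 8$: here $2$ is totally ramified in $F_n$, so $s' = 1$ and $r = 2^{s-2}$; I would show $v = r$, i.e. the signature map is surjective. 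This is where I would need a genuine input: a classical fact that the real cyclotomic units (or the cyclotomic units of $\Q(\zeta_n)$, suitably combined) realize every sign pattern — equivalently that $F_n$ has a totally positive unit that is not a square, and more precisely that the full index $[R_n^\times : R_{n,+}^\times] = 2^r / 2 \cdot \ldots$; concretely $v = r$ forces $R_{n,+}^\times/(R_n^\times)^2 \cong (\Z/2\Z)^{(r+1) - r} = \Z/2\Z$, giving (a). For $n = 3 \cdot 2^s$ with $4 \mid n$: now $2$ splits, and one computes $s'$ (the number of primes above $2$ in $R_n^+$) and $r = [F_n:\Q]$ directly, again arguing $v$ takes the value that makes $r + s' - v = 1$.

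The cleanest way to handle the signature computation uniformly, which is the step I expect to be the main obstacle, is to exhibit an explicit totally positive unit of $R_n^+$ that is not a square in $R_n^\times$ — for instance a suitable real cyclotomic unit like $\zeta_n + \overline{\zeta_n} + c$ for an appropriate rational $c$, or a product of $\frac{1 - \zeta_n^a}{1 - \zeta_n}$-type units — together with the complementary fact that $\dim_{\mathbb{F}_2}\left(R_{n,+}^\times/(R_n^\times)^2\right) \leq 1$. The upper bound is the delicate direction: it amounts to showing the signature map has image of codimension exactly $1$ in $(\Z/2\Z)^r$, equivalently that $-1$ (which certainly lies in $R_n^\times$ and has nontrivial sign in every place, contributing $v \geq 1$... wait, $-1$ has the all-ones signature) together with the remaining units spans a hyperplane's complement — i.e. the only relation is that $-1$ is the unique nontrivial totally \emph{negative} element up to squares, forcing $R_{n,+}^\times/(R_n^\times)^2$ to have order dividing $2$. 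I would pin this down using the structure of the cyclotomic unit group and the fact (Weber / classical for $2$-power, tables for $3 \cdot 2^s$) that $h_{F_n}^+ = 1$, so that Leopoldt's cyclotomic-unit index computations give the full unit group up to a controlled $2$-power index — and then a direct check that this index and the signature rank combine to leave exactly a $\Z/2\Z$. An alternative, more self-contained route for the small conductors actually needed elsewhere in the paper is simply to compute $R_n^\times$ explicitly (it is finitely generated of known rank) in each of the finitely many cases that matter and read off the signatures by numerical evaluation; I would present the general argument but fall back on this explicit verification where a clean classical reference for $v = r$ is awkward to cite.
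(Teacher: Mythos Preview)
Your overall strategy---apply Proposition~\ref{hawk} and reduce to computing $(R_n^+)^\times_+/((R_n^+)^\times)^2$---is exactly the paper's. But two of your claimed inputs are wrong or missing, and they are precisely the places where the argument has content.

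First, you assert that $h^+_n = 1$ for $n = 2^s$ is ``classical''. It is not: this is Weber's \emph{conjecture}, still open in general. What is a theorem (Washington, \cite[Theorem~10.4(b)]{w}) is that the class number of $\OO_{F_n}$ is \emph{odd} for $n=2^s$, and this is all you need, since Proposition~\ref{hawk} only requires $\Cl(R_n^+)[2]=0$. The same oddness result, applied to the tower $F_n/\Q(\sqrt{3})$, handles $n=3\cdot 2^s$. So the class-group side is fine once you cite the right statement; as written, your argument rests on an unproven conjecture.

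Second, the signature computation is where you visibly struggle, and the paper's proof shows why: the clean input is not an explicit construction of units with prescribed signs, but \emph{Weber's theorem} (a genuine theorem, distinct from the conjecture above) that every totally positive unit of $\Z[\zeta_{2^s}+\bar\zeta_{2^s}]$ is already a square. This immediately gives that the signature map on $\OO_{F_n}^\times$ is injective modulo squares, hence surjective onto $(\Z/2\Z)^r$, and then adjoining the single prime above $2$ contributes exactly one extra $\Z/2\Z$ to the totally positive units modulo squares. Your proposal circles around this without naming it; the fallback of ``compute explicitly for the cases that matter'' does not prove the proposition as stated, which is for all $s$. For $n=3\cdot 2^s$ the analogue of Weber's theorem is not classical, and the paper simply cites \cite[Theorem~3.13(b)]{IJKLZ1} for the unit computation; you would need either that reference or a separate argument.
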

\begin{proof}
Let $\OOn:=\Z[\zeta_n+\overline{\zeta}_n]$, the ring of integers in $F_n:=\Q(\zeta_n)^+$.\\
\eqref{grounded1}: Let $n=2^s$, $n\geq 8$. Then $\OOn$ has odd class number by
\cite[Theorem 10.4(b)]{w} and so $R_n^+=\OOn[1/2]$ has odd class number.
Every totally positive unit in $\OOn$ is  a square by Weber's Theorem \cite{we} and there is one prime $\fp$ in
$F_n$ above $2$.  Hence $(R_n^+)_+/[(R_n^+)^\times]^2\cong \Z/2\Z\cong \Sel_2^+(R_n^+)$.\\
\eqref{grounded2}: Let $n=3\cdot 2^s$, $s\geq 3$. Then $\OOn$ has odd class number
by applying \cite[Theorem 10.4]{w} to $F_n/\Q(\sqrt{3})$. We have 
$(R_n^+)_+^\times/[(R_n^+)^\times]^2\cong \Z/2\Z$
by \cite[Theorem 3.13(b)]{IJKLZ1}.

\end{proof}
The functions of Definition \ref{eel} satisfy the following properties.
\begin{lemma}
\label{phi theta}
For $M\in\SOT(R^+)$ and $$A=\left[\begin{array}{cc} a_1 + a_2i & a_3+ a_4i \\ -a_3 + a_4i &
  a_1-a_2i \end{array}\right]\in\SUT(R)$$ we have:
\begin{enumerate}[\upshape (a)]
\item \label{1} $\phi_i(\Ad(A)) = a_i^2$, $1\leq i\leq 4$.
\item \label{2} $\theta_{ij}(\Ad(A)) = a_ia_j$, $1\leq i, j\leq 4$, $i\neq j$.
\item \label{3} $\phi_1(M)+\phi_2(M)+\phi_3(M)+\phi_4(M)=1$.
\item \label{4} $\phi_i(M)\phi_j(M) = \theta_{ij}(M)^2$, $1\leq i,j\leq 4$, $i\neq j$,
and 
 $\theta_{\pi(1)\pi(2)}(M)\theta_{\pi(3)\pi(4)}(M)$ does not depend on the choice
  of $\pi \in \cS_4$.
\item \label{5} There exists a well a unique well-defined function
  $\phi: \SOT(R^+)\to F^\times/(F^\times)^2$ which agrees with
  each $\phi_i$, $1\leq i\leq 4$, whenever the latter is nonzero.
\item \label{6} The image of $\phi$ lies in $\Sel^+_2(R^+)\subseteq F^\times/(F^\times)^2$.
  In other words
  $\phi(M)$ has even valuation at all primes of $R^+$ and is totally
  positive.
\item \label{7} For $i \in \{x,y,z\}$ we have $\phi(M) = \phi(MM_i) = \phi(M_iM)$.
\end{enumerate}
\end{lemma}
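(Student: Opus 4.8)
The plan is to establish (a) and (b) by direct substitution into \eqref{pi}, and then to deduce the rest from these together with the surjectivity of $\Ad$ on points over an algebraically closed field. For (a), substituting $A$ into \eqref{pi} and summing the diagonal gives $\Tr(\Ad A)=3a_1^2-a_2^2-a_3^2-a_4^2$, which equals $4a_1^2-1$ after using $a_1^2+a_2^2+a_3^2+a_4^2=1$; hence $\phi_1(\Ad A)=(1+\Tr(\Ad A))/4=a_1^2$. The cases $i=2,3,4$ are identical once $\Tr(\Ad A)$ is replaced by $\Tr(\Ad(A)M_z)$, $\Tr(\Ad(A)M_y)$, $\Tr(\Ad(A)M_x)$ respectively, each of which works out to $4a_i^2-1$. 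For (b) one reads the off-diagonal entries directly off \eqref{pi}, for instance $(\Ad A)_{12}-(\Ad A)_{21}=4a_1a_2$ so $\theta_{12}(\Ad A)=a_1a_2$, and the remaining five $\theta_{ij}$ are handled the same way.

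Part (c) is the observation that $\mathrm{diag}(1,1,1)+M_x+M_y+M_z$ is the zero matrix, so that $\phi_1(M)+\phi_2(M)+\phi_3(M)+\phi_4(M)=1+\tfrac14\Tr\bigl(M(\mathrm{diag}(1,1,1)+M_x+M_y+M_z)\bigr)=1$. For (d) I would use that the isogeny $\Ad\colon A_1^\ast\to\SOT$ of \eqref{ses}, being faithfully flat with kernel $\Bmu_2$, is surjective on $\overline F$-points for an algebraic closure $\overline F$ of $F$ (since $H^1_{\mathrm{fppf}}(\overline F,\Bmu_2)=0$). Writing an arbitrary $M\in\SOT(\overline F)$ as $\Ad(A)$, parts (a) and (b) give $\phi_i(M)\phi_j(M)=a_i^2a_j^2=\theta_{ij}(M)^2$ and $\theta_{\pi(1)\pi(2)}(M)\,\theta_{\pi(3)\pi(4)}(M)=a_1a_2a_3a_4$, the latter manifestly independent of $\pi\in\cS_4$. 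These are identities between regular functions on $\SOT$, which over $F$ is a smooth and hence reduced variety; as they hold at every $\overline F$-point, which form a Zariski-dense set, they hold identically, in particular on $\SOT(R^+)$.

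For (e), part (c) guarantees that some $\phi_i(M)\neq 0$; define $\phi(M)$ to be the class of any such $\phi_i(M)$ in $F^\times/(F^\times)^2$. If $\phi_i(M)$ and $\phi_j(M)$ are both nonzero, then by (d) their ratio is $(\theta_{ij}(M)/\phi_j(M))^2\in(F^\times)^2$, so the class is independent of the choice, and uniqueness is forced since $\phi$ must agree with $\phi_i$ on the nonempty set $\{i:\phi_i(M)\neq 0\}$. For (f), each $\phi_i(M)$ lies in $R^+$ because $4\in(R^+)^\times$, and by (c) the four of them sum to $1$; hence at every finite prime $\fp$ of $R^+$ at least one $\phi_i(M)$ is a $\fp$-unit, so $\phi(M)$, being the class of that $\fp$-unit, has valuation $\equiv 0\pmod 2$ at $\fp$. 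For total positivity, fix a real embedding of $F$: then $M\in\SOT(\R)=\Ad(\SUT(\CC))$, so $M=\Ad(A)$ with $a_1,\dots,a_4\in\R$, and (a) gives $\phi_i(M)=a_i^2\geq 0$, so any nonzero $\phi_i(M)$ is positive there; running over all real embeddings shows $\phi(M)\in\Sel_2^+(R^+)$. Finally, for (g): since $M_x,M_y,M_z$ are diagonal we have $(MM_i)_{jj}=(M_iM)_{jj}=(M_i)_{jj}M_{jj}$, so a single substitution in Definition \ref{eel} shows that passing from $M$ to $MM_i$ or to $M_iM$ merely permutes $\phi_1(M),\dots,\phi_4(M)$ by a product of two transpositions; by (e) all the nonzero values represent $\phi(M)$, so $\phi$ is unchanged.

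The only step requiring an idea rather than bookkeeping is the pair of polynomial identities in (d): proving them by hand would mean a sizeable expansion using $MM^t=I$ and $\det M=1$, whereas routing through the surjectivity of $\Ad$ over $\overline F$ together with reducedness of $\SOT$ makes them immediate. The other point deserving a little care is the even-valuation assertion in (f), which rests on the fact that $\sum_i\phi_i(M)=1$ forbids all four $\phi_i(M)$ from lying simultaneously in any prime.
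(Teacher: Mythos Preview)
Your proof is correct and, for parts (a)--(c), (e), and (g), proceeds essentially as the paper does. The one substantive difference is in (d): the paper obtains the identities $\phi_i\phi_j=\theta_{ij}^2$ (and the permutation-invariance of $\theta_{\pi(1)\pi(2)}\theta_{\pi(3)\pi(4)}$) by a direct Gr\"obner-basis computation from the equations $MM^t=I$, $\det M=1$, whereas you deduce them from (a) and (b) via the surjectivity of $\Ad$ on $\overline F$-points together with the reducedness of $\SOT$ over a field of characteristic~$0$. Your argument is more conceptual and sidesteps any symbolic computation beyond what is already needed for (a) and (b); the paper's approach, by contrast, is entirely elementary and makes no appeal to the geometry of $\Ad$. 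A minor variant also appears in (f): for total positivity the paper invokes $\Tr(M)\ge -1$ for $M\in\SOT(\R)$ directly, while you lift through $\Ad$ over $\R$; these are two phrasings of the same fact about real rotations.
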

\begin{proof}
\eqref{3} follows immediately by summing the definitions of the
$\phi_i$'s.  \eqref{1} and \eqref{2} follow immediately by plugging
in the definition of $\Ad$ \eqref{pi}.

\eqref{4} is not as trivial but can be derived from the defining
equations of $\SOT$ by a simple Gr\"obner Basis calculation.

To see \eqref{5} observe that by \eqref{3} at least one of
the $\phi_i(M)$'s is always nonzero and by \eqref{4} all the
nonzero $\phi_i(M)$'s always agree once one mods out by squares.

\eqref{6} follows since by \eqref{3} at each prime of $R^+$ at
least one of the $\phi_i(M)$'s must have valuation $0$.  The total
positivity follows from the definitions of $\phi_i$ and the fact that
for $M\in\SOT(\R)$ we always have $\Tr(M)\ge-1$.

Finally, \eqref{7} holds, because the sets 
$\{\phi_j(M)\}$, $\{\phi_j(MM_i)\}$, $\{\phi_j(M_iM)\}$ for $1 \le j \le 4$ are visibly equal.
\end{proof}

\begin{theorem}
The map $\phi:\SOT(R^+)\to\Sel^+_2(R^+)$ is group homomorphism and
$$1\to\PSUT(R)\xrightarrow{\Ad}\SOT(R^+)\xrightarrow{\phi}\Sel^+_2(R^+)$$
  is an exact sequence.
\end{theorem}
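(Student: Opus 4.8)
The plan is to establish three things: (i) $\phi$ is multiplicative, (ii) $\ker\phi \supseteq \PSUT(R)$, and (iii) $\ker\phi \subseteq \PSUT(R)$; exactness and the homomorphism property then follow together. For (ii), take $A\in\SUT(R)$ as in Lemma \ref{phi theta} and set $M = \Ad(A)$. By part \eqref{1} of that lemma, $\phi_i(M) = a_i^2$ for each $i$, and by part \eqref{3} not all $a_i$ vanish, so there is an $i$ with $\phi_i(M) = a_i^2 \neq 0$; hence $\phi(M) = a_i^2 \equiv 1 \pmod{(F^\times)^2}$. Thus $\phi\circ\Ad$ is trivial on $\SUT(R)$, and since $\PSUT(R)$ is the image of $\SUT(R)$ under $\Ad$ (by \eqref{crow}), we get $\PSUT(R) \subseteq \ker\phi$.

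For (i), the multiplicativity of $\phi$, I would argue as follows. Given $M, N \in \SOT(R^+)$, I want $\phi(MN) = \phi(M)\phi(N)$ in $F^\times/(F^\times)^2$. Choose indices $i,j,k$ with $\phi_i(M)$, $\phi_j(N)$, and $\phi_k(MN)$ all nonzero (possible by \eqref{3}). Using \eqref{4}, once we mod out by squares the choice of index is immaterial, and moreover the ``mixed'' products $\theta_{ab}(M)\theta_{cd}(M)$ are well-defined; the key identity to verify is a bilinear relation expressing $\phi_k(MN)$ (for suitable $k$) as a sum of terms $\phi_a(M)\phi_b(N)$ and $\theta_{ab}(M)\theta_{cd}(N)$ whose structure, modulo squares and using $\phi_a(M)\phi_b(M) = \theta_{ab}(M)^2$, collapses to $\phi_i(M)\phi_j(N)$. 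Concretely, this is the statement that $\Ad$ restricted to appropriate data behaves like the quaternion norm: if $M = \Ad(A/\sqrt{\det})$-type data lifts (even just over $\R$ or a quadratic extension, where $A_1^\ast$ does surject onto $\SOT$), then the $a_i$ are square roots of the $\phi_i$ up to a common scalar, and multiplicativity of $\phi$ is exactly multiplicativity of the quaternion norm read through \eqref{1}. I expect the cleanest route is: work over $F(i)$ or an algebraic closure, lift $M$ and $N$ to $\SUT$ (where the sequence \eqref{ses} is surjective on points), use $\phi_i(\Ad A) = a_i^2$ to see $\phi(M)$ equals the squared ``scaling factor,'' and transport the identity back; this is a Gröbner-basis-style identity in the same spirit as Lemma \ref{phi theta}\eqref{4}.

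For (iii), suppose $M \in \SOT(R^+)$ with $\phi(M) \in (F^\times)^2$. Pick $i$ with $\phi_i(M) \neq 0$; then $\phi_i(M) = c^2$ for some $c \in F^\times$. Set $a_i = c$, and for $j \neq i$ put $a_j = \theta_{ij}(M)/c$; by Lemma \ref{phi theta}\eqref{4} we have $a_j^2 = \theta_{ij}(M)^2/c^2 = \phi_i(M)\phi_j(M)/\phi_i(M) = \phi_j(M)$, and the well-definedness-up-to-sign clause in \eqref{4} guarantees the $a_j$ are consistent (i.e. $a_ja_k = \theta_{jk}(M)$). Assemble $A = \left[\begin{smallmatrix} a_1 + a_2 i & a_3 + a_4 i \\ -a_3 + a_4 i & a_1 - a_2 i\end{smallmatrix}\right]$. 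Then $\sum a_i^2 = \sum \phi_i(M) = 1$ by \eqref{3}, so $A \in \SUT(F)$, and comparing \eqref{pi} entry-by-entry with the formulas in Definition \ref{eel} gives $\Ad(A) = M$. It remains to check $A$ has entries in $R$: the $a_i$ a priori lie in $F$, but $\Ad(A) = M$ has entries in $R^+$, and since $\phi_i(M), \theta_{ij}(M) \in R^+$ with $\phi_i(M) = a_i^2$ having even valuation everywhere (as $\phi(M)$ is a square, or directly from \eqref{6}), each $a_i$ has nonnegative valuation at every finite prime of $R^+$, so $a_i \in R^+ \subseteq R$ (using that $R^+$ is a Dedekind domain, hence integrally closed and equal to the intersection of its localizations). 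Thus $M = \Ad(A) \in \PSUT(R)$.

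The main obstacle is step (i): verifying the bilinear identity underlying multiplicativity of $\phi$. Lemma \ref{phi theta}\eqref{4} already signals that these identities, while true, require a Gröbner-basis computation rather than a one-line manipulation, and keeping track of which index to choose on $MN$ (and that the answer is square-class-independent) is the delicate bookkeeping; the lift-to-$\SUT$-over-an-extension trick is what makes it conceptual rather than a brute-force $3\times 3$ matrix-product expansion.
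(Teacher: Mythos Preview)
Your outline is sound, and your treatment of (ii) and (iii) is essentially the paper's. For (iii) the paper is terser---it asserts that all $\phi_i(M)$ are squares in $R^+$ and chooses signs of the roots so that $a_ia_j=\theta_{ij}(M)$---but your construction $a_j=\theta_{ij}(M)/c$ is equivalent and makes integrality explicit. One small point: the consistency you need is $\phi_i(M)\theta_{jk}(M)=\theta_{ij}(M)\theta_{ik}(M)$, which is not literally the statement of Lemma~\ref{phi theta}\eqref{4} (that gives $\theta_{12}\theta_{34}=\theta_{13}\theta_{24}=\theta_{14}\theta_{23}$); it is, however, another easy consequence of the $\SOT$ relations.

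For (i) the paper takes the Gr\"obner-basis route you mention as a fallback: it writes down the explicit identity
\[
\phi_i(M)\phi_i(N)\phi_1(MN)=\bigl(\phi_i(M)\phi_i(N)\pm\theta_{ij}(M)\theta_{ij}(N)\pm\theta_{ik}(M)\theta_{ik}(N)\pm\theta_{i\ell}(M)\theta_{i\ell}(N)\bigr)^2
\]
(sign $-$ exactly when $1$ appears in the subscript), which gives multiplicativity whenever some index $i$ has $\phi_i(M),\phi_i(N)$ both nonzero. Your lifting idea can in fact \emph{derive} this identity rather than merely motivate it: over $\bar F$ write $M=\Ad(A)$, $N=\Ad(B)$, and observe that $\phi_1(M)\phi_1(N)-\sum_{j>1}\theta_{1j}(M)\theta_{1j}(N)=a_1b_1(a_1b_1-a_2b_2-a_3b_3-a_4b_4)=a_1b_1c_1$, whose square is $\phi_1(M)\phi_1(N)\phi_1(MN)$; density of the image of $\Ad\times\Ad$ then promotes this to a polynomial identity on $\SOT\times\SOT$. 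As you stated it, however, the lifting argument only shows that $\phi(MN)$ becomes a square in $F(\sqrt{\phi(M)},\sqrt{\phi(N)})$, hence lies in $\{1,\phi(M),\phi(N),\phi(M)\phi(N)\}$ by Kummer theory---not enough on its own. The other gap in your sketch is the edge case where \emph{no} common index works (two of the $\phi_i(M)$ vanish and the complementary two vanish for $N$); the paper handles this by first using Lemma~\ref{phi theta}\eqref{7} to reduce to $\phi_1(MN)\neq 0$ and then disposing of the residual case by an explicit $2\times 2$-block computation.
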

\begin{proof}
  In view of Lemma \ref{phi theta}\eqref{7},
  we may assume that $\phi_1(MN) \ne 0$.
It can be checked using a simple Gr\"obner basis calculation that for
$M,N\in\SOT(R^+)$ and $1 \le i \le 4$, the equation
\begin{align}
\label{pinto}
\phi_i(M)\phi_i(N)&\phi_1(MN)=\\
&\big(\phi_i(M)\phi_i(N) \pm
\theta_{ij}(M)\theta_{ij}(N) \pm \theta_{ik}(M)\theta_{ik}(N) \pm
\theta_{i\ell}(M)\theta_{i\ell}(N)\big)^2\nonumber
\end{align}
follows from the defining equations of
$\SOT$, where $\{i,j,k,\ell\} = \{1,2,3,4\}$ and the sign is $-1$ when
$1$ appears in the subscript and $1$ otherwise.
Hence $\phi(M)\phi(N)=\phi(MN)$ as long as 
$\phi_i(M)$, $\phi_i(N)$ are both nonzero for the same $i$.

If three of the $\phi_i(M)$ are $0$, then $M \in \{I_3,M_x,M_y,M_z\}$ and
it is simple to check that $\phi(MN) = \phi(M)\phi(N)$; similarly if three of
the $\phi_i(N)$ are $0$.  Otherwise, there is no problem unless
two are $0$ for $M$ and the other two are $0$ for $N$.
Suppose that
$\phi_1(M) = \phi_2(M) = \phi_3(N) = \phi_4(N)$ (the other cases are similar).
Then we have
$$M = \begin{pmatrix}a&b&0\\b&-a&0\\0&0&-1\end{pmatrix}, \quad
N = \begin{pmatrix}c&d&0\\-d&-c&0\\0&0&1\end{pmatrix},$$
where $a^2 + b^2 = c^2 + d^2 = 1$.  In this case it is easy to check that
$\phi_1(MN) = 0$, contradicting our choice of $N$ (and it is also easy to check
that $\phi(MN) = \phi(M) \phi(N)$).
Thus, $\phi$ is a
group homomorphism.

That $\phi\circ\Ad=1$ follows from \ref{phi theta}(\ref{1}).  Now
suppose $M\in\ker\phi$, hence the $\phi_i(M)$ are all squares in
$R^+$.  Let $a_i=\sqrt{\phi_i(M)}$ with signs chosen so that
$a_ia_j=\theta_{ij}(M)$; we can do this by \ref{phi theta}(\ref{2}).
Now it is again straightforward to check that the
equations $$M=\Ad\left(\left[\begin{array}{cc} a_1 + a_2i & a_3+ a_4i
    \\ -a_3 + a_4i & a_1-a_2i \end{array}\right]\right)$$ follow from
the defining equations of $\SOT$.
\end{proof}

\begin{proof}
It can be checked using a simple Gr\"obner Basis calculation that for
$M,N\in\SOT(R^+)$ the equation
\begin{align}
\label{pinto}
\phi_1(M)\phi_1(N)&\phi_1(MN)=\\
&\big(\phi_1(M)\phi_1(N) -
\theta_{12}(M)\theta_{12}(N) - \theta_{13}(M)\theta_{13}(N) -
\theta_{14}(M)\theta_{14}(N)\big)^2\nonumber
\end{align}
 follows from the defining equations of
$\SOT$.  Hence, $\phi(M)\phi(N)=\phi(MN)$ as long as all three $\phi_1(M)$,
$\phi_1(M)$, $\phi_{1}(MN)$ are
nonzero.  In general, one has give expressions analogous to 
\eqref{pinto} for $\phi_i(M)$, $\phi_j(M)$, $\phi_k(M)$ with $1\leq i,j,k\leq 4$.
These additional cases are similar.
Thus, $\phi$ is a
group homomorphism.

That $\phi\circ\Ad=1$ follows from \ref{phi theta}(\ref{1}).  Now
suppose $M\in\ker\phi$, hence the $\phi_i(M)$ are all squares in
$R^+$.  Let $a_i=\sqrt{\phi_i(M)}$ with signs chosen so that
$a_ia_j=\theta_{ij}(M)$, we can do this by \ref{phi theta}(\ref{2}).
Now it is again straightforward to check that the
equations $$M=\Ad\left(\left[\begin{array}{cc} a_1 + a_2i & a_3+ a_4i
    \\ -a_3 + a_4i & a_1-a_2i \end{array}\right]\right)$$ follow from
defining equations of $\SOT$.
\end{proof}
\begin{corollary}
\label{foal}
The subgroups $\PSUT(R)$ and $\PUT(R)$ of $\SOT(R^+)$ are normal.
\end{corollary}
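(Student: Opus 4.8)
The plan is to deduce both normality statements formally from the preceding theorem, which provides a group homomorphism $\phi\colon\SOT(R^+)\to\Sel^+_2(R^+)$ with kernel exactly $\PSUT(R)$ (viewed inside $\SOT(R^+)$ via $\Ad$ as in \eqref{crow}). The underlying principle is the standard one: if $\phi\colon G\to A$ is a homomorphism to an abelian group $A$, then every subgroup $K$ of $G$ with $\ker\phi\subseteq K$ is normal in $G$. Indeed, for $g\in G$ and $k\in K$ one has $\phi(gkg^{-1})=\phi(g)\phi(k)\phi(g)^{-1}=\phi(k)$ since $A$ is abelian, so $gkg^{-1}k^{-1}\in\ker\phi\subseteq K$ and hence $gkg^{-1}\in K$.

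First I would record that $\Sel^+_2(R^+)$ is abelian — in fact an elementary abelian $2$-group, being by definition a subquotient of $F^\times/(F^\times)^2$. Since $\PSUT(R)=\ker\phi$ exactly (this is the content of the preceding theorem), $\PSUT(R)$ is normal in $\SOT(R^+)$. For $\PUT(R)$, recall from the discussion around \eqref{crow} and \eqref{finch} that $\PSUT(R)\leq\PUT(R)\leq\SOT(R^+)$; thus $\PUT(R)$ is a subgroup of $\SOT(R^+)$ containing $\ker\phi$, and the principle above applies verbatim. Equivalently, $\PUT(R)=\phi^{-1}\bigl(\phi(\PUT(R))\bigr)$ is the full preimage of the subgroup $\phi(\PUT(R))$ of the abelian group $\Sel^+_2(R^+)$, hence normal.

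I do not expect any genuine obstacle here: all the real work is in the preceding theorem (that $\phi$ is a homomorphism and that its kernel is precisely $\PSUT(R)$), and Corollary \ref{foal} is a purely formal consequence. The only points to check along the way are trivial — that the target of $\phi$ is abelian and that $\PSUT(R)\subseteq\PUT(R)$ — both immediate from the definitions and from \eqref{crow}--\eqref{finch}.
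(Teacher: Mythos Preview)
Your proposal is correct and is exactly the argument the paper intends: as the paper notes in Remark~\ref{robin}, normality is an immediate formal consequence of the preceding theorem that $\phi$ is a homomorphism to the abelian group $\Sel^+_2(R^+)$ with kernel $\PSUT(R)$, together with the containment $\PSUT(R)\leq\PUT(R)$ from \eqref{crow}--\eqref{finch}. The paper states the corollary without proof for precisely this reason.
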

\begin{definition1}
\label{mare}
{\rm
\begin{enumerate}[\upshape (a)]
\item
\label{mare1}
Set $C(R)=\SOT(R^+)/\PSUT(R)$, $\overline{C}(R)=\SOT(R^+)/\PUT(R)$,
$c(R)=\# C(R)$, and $\overline{c}(R)=\# \overline{C}(R)$.
Hence $c(R)$, $\overline{c}(R)$ are powers of 2 with $c(R)\neq 1$.
We have
\begin{equation}
\label{jay}
c(R)=[\PUT(R):\PSUT(R)]\overline{c}(R).
\end{equation}
Example \ref{dreary} shows that $\overline{c}(\Z[\sqrt{21},i,1/2])\neq 1$.
\item
\label{mare2}
Let $r(n)$ be the number of primes in $K_n:=\Q(\zeta_n)$ above $2$ and $r_+(n)$
be the number of primes in $F_n:=\Q(\zeta_n)^+$ above $2$.
\end{enumerate}
}
\end{definition1}

We now state a result from \cite{IJKLZ2} which we will
need:
\begin{proposition}\textup{(\cite[Proposition 2.3]{IJKLZ2})}
\label{index}
Suppose $n\geq 8$ and $4|n$ with $r(n)$, $r_+(n)$ as in Definition 
\textup{\ref{mare}\eqref{mare2}}.
\begin{enumerate}[\upshape (a)]
\item
\label{index1}
$\PUT(R_n)/\PSUT(R_n)\cong (\Z/2\Z)^{1+r(n)-r_+(n)}$.
\item
\label{index2}
$\PUT(R_n)/\PUTz(R_n)\cong (\Z/2\Z)^{r(n)-r_+(n)}$.
\item
\label{index3}
$\PUTz(R_n)/\PSUT(R_n)\cong\Z/2\Z$.
\end{enumerate}
\end{proposition}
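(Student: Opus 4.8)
The plan is to reduce all three claims to the determinant map on $\UT(R_n)$ together with one computation of a unit group. Write $Z$ for the subgroup of scalar matrices $\lambda I_2\in\UT(R_n)$ (so $\lambda\in R_n^\times$ with $\lambda\overline\lambda=1$); this is exactly the kernel of $\pi\colon\UT(R_n)\to\SOT(R_n^+)$, so $\PUT(R_n)=\UT(R_n)/Z$, and by definition $\PSUT(R_n)$ and $\PUTz(R_n)$ are the images $\big(\SUT(R_n)\cdot Z\big)/Z$ and $\big(\UTz(R_n)\cdot Z\big)/Z$. Since $\SUT(R_n)$ and $\UTz(R_n)$ are normal in $\UT(R_n)$ (as kernel and preimage under $\det$), all three quotients in the statement make sense and
\[
\PUT(R_n)/\PSUT(R_n)\cong\UT(R_n)/\big(\SUT(R_n)\cdot Z\big),\qquad
\PUT(R_n)/\PUTz(R_n)\cong\UT(R_n)/\big(\UTz(R_n)\cdot Z\big),
\]
while $\PUTz(R_n)/\PSUT(R_n)\cong\big(\UTz(R_n)\cdot Z\big)/\big(\SUT(R_n)\cdot Z\big)$.

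Next I would apply $\det$. Set $U^1:=\{u\in R_n^\times:u\overline u=1\}$, a finitely generated abelian group. Because $\mathrm{diag}(u,1)\in\UT(R_n)$ for every $u\in U^1$, the determinant gives a short exact sequence $1\to\SUT(R_n)\to\UT(R_n)\xrightarrow{\det}U^1\to1$, and $\det\UTz(R_n)=\mu(R_n)=:\mu_n$ since every root of unity of $R_n$ has relative norm $1$ to $R_n^+$. As $\det(\lambda I_2)=\lambda^2$, we get $\det\big(\SUT(R_n)\cdot Z\big)=(U^1)^2$ and $\det\big(\UTz(R_n)\cdot Z\big)=\mu_n(U^1)^2$. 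Using that $Z$ is central, if $\det g$ lands in $(U^1)^2$ (resp.\ in $\mu_n(U^1)^2$) then correcting $g$ by a scalar puts it in $\SUT(R_n)\cdot Z$ (resp.\ in $\UTz(R_n)\cdot Z$); hence the maps induced by $\det$ are isomorphisms
\[
\PUT(R_n)/\PSUT(R_n)\cong U^1/(U^1)^2,\quad
\PUT(R_n)/\PUTz(R_n)\cong U^1/\big(\mu_n(U^1)^2\big),\quad
\PUTz(R_n)/\PSUT(R_n)\cong\mu_n/\big(\mu_n\cap(U^1)^2\big).
\]

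It then remains to pin down the structure of $U^1$ as an abelian group. Its torsion subgroup is $\mu(R_n)=\mu_n$, cyclic of even order $n$ because $4\mid n$; thus $\mu_n/\mu_n^2\cong\Z/2\Z$ and $\mu_n\cap(U^1)^2=\mu_n^2$. For the free rank I would observe that the relative norm $\Nm_{K_n/F_n}\colon R_n^\times\to(R_n^+)^\times$, $u\mapsto u\overline u$, has kernel $U^1$ and image containing $\big((R_n^+)^\times\big)^2$ (since $v\overline v=v^2$ for $v\in F_n$), hence image of finite index in $(R_n^+)^\times$; so by the Dirichlet $S$-unit theorem
\[
\operatorname{rank}U^1=\operatorname{rank}R_n^\times-\operatorname{rank}(R_n^+)^\times=\Big(\tfrac{\phi(n)}{2}+r(n)-1\Big)-\Big(\tfrac{\phi(n)}{2}+r_+(n)-1\Big)=r(n)-r_+(n),
\]
the archimedean contributions cancelling since $K_n$ is totally complex of degree $\phi(n)$ over $\Q$ while $F_n$ is totally real of degree $\phi(n)/2$. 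So $U^1\cong\Z/n\Z\oplus\Z^{\,r(n)-r_+(n)}$; feeding this into the three displayed isomorphisms yields $(\Z/2\Z)^{1+r(n)-r_+(n)}$, $(\Z/2\Z)^{\,r(n)-r_+(n)}$ and $\Z/2\Z$, proving (a), (b) and (c).

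The only step that is not bookkeeping is the rank computation for $U^1$ — equivalently, the finiteness of the cokernel of the relative-norm map on $S$-units — together with keeping the identifications of centers and images straight; I expect that to be the main (but still routine) point. Note that Corollary \ref{foal} plays no role here: each of $\PSUT(R_n)$ and $\PUTz(R_n)$ is already visibly normal inside $\PUT(R_n)$.
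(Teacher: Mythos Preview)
Your argument is correct. The reduction via $\det$ to the norm-one $S$-unit group $U^1$ is clean, and the rank computation $\operatorname{rank}U^1=r(n)-r_+(n)$ together with the identification of the torsion as $\mu_n$ gives exactly the three quotients claimed. The one place worth a sentence of justification is the equality $\mu_n\cap(U^1)^2=\mu_n^2$: since $U^1/\mu_n$ is free abelian, any splitting $U^1\cong\mu_n\oplus\Z^{r(n)-r_+(n)}$ shows that a torsion element lying in $2U^1$ must already lie in $2\mu_n$, so this is fine.

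As for comparison with the paper: there is nothing to compare. The paper does not prove this proposition at all; it simply imports it as \cite[Proposition 2.3]{IJKLZ2}. Your self-contained proof via the determinant and the $S$-unit theorem is therefore a genuine addition rather than a reworking of anything in the text.
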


Combining Proposition \ref{index}\eqref{index1} with \eqref{jay}
then gives the following:

\begin{proposition}
\label{lint}
For $R_n=\Z[\zeta_n, 1/2]$, $4|n$, $n\geq 8$, we have
$c(R_n)=2^{1+r(n)-r_+(n)}\overline{c}(R_n)$.
\end{proposition}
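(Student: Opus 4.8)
The plan is simply to chain together the two facts already in hand. By Definition \ref{mare}\eqref{mare1} we have $c(R_n)=\#\bigl(\SOT(R_n^+)/\PSUT(R_n)\bigr)$ and $\overline{c}(R_n)=\#\bigl(\SOT(R_n^+)/\PUT(R_n)\bigr)$, and by Corollary \ref{foal} the groups sit in a tower of \emph{normal} subgroups $\PSUT(R_n)\leq \PUT(R_n)\leq \SOT(R_n^+)$. Multiplicativity of the index in this tower is precisely the identity \eqref{jay}: $c(R_n)=[\PUT(R_n):\PSUT(R_n)]\,\overline{c}(R_n)$.

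It then remains only to evaluate the middle index. For this I would invoke Proposition \ref{index}\eqref{index1}, which (via \cite{IJKLZ2}) gives $\PUT(R_n)/\PSUT(R_n)\cong (\Z/2\Z)^{1+r(n)-r_+(n)}$, so that $[\PUT(R_n):\PSUT(R_n)]=2^{1+r(n)-r_+(n)}$. Substituting into \eqref{jay} yields $c(R_n)=2^{1+r(n)-r_+(n)}\,\overline{c}(R_n)$, which is the assertion.

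There is essentially no obstacle: all the real work has already been done, both in deriving \eqref{jay} (bookkeeping with the definitions in Definition \ref{mare}) and in Proposition \ref{index}. The only minor point of care is finiteness of the quantities involved, so that "index" is literally the quotient of orders; but this is guaranteed because $\SOT(R_n^+)/\PSUT(R_n)$ is a finite abelian $2$-group by the results of the preceding section, and $\PUT(R_n)/\PSUT(R_n)$ is then a (finite) subgroup of it. Thus the proposition is immediate once Proposition \ref{index}\eqref{index1} and \eqref{jay} are available.
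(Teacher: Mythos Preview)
Your proof is correct and follows exactly the paper's approach: the paper simply states that the proposition follows by combining Proposition \ref{index}\eqref{index1} with \eqref{jay}, which is precisely what you do.
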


We can compute $c(R)$ and $\overline{c}(R)$ in
some important examples with $R=R_n:=\Z[\zeta_n, 1/2]$.
\begin{theorem}
\label{ex}
\begin{enumerate}[\upshape (a)]
\item
\label{ex1}
Suppose $n=2^s$, $n\geq 8$.
Then $c(R_n)=2$ and $\overline{c}(R_n)=1$.
\item
\label{ex2}
Suppose $n=3\cdot 2^s$, $s\geq 2$.  
Then $c(R_n)=2$ and $\overline{c}(R_n)=1$.
\end{enumerate}
\end{theorem}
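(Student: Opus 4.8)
\emph{Strategy.} Both invariants will be read off from results already established. Combine: (i) the exact sequence $1\to\PSUT(R)\xrightarrow{\Ad}\SOT(R^+)\xrightarrow{\phi}\Sel^+_2(R^+)$, which identifies $C(R_n)=\SOT(R_n^+)/\PSUT(R_n)$ with the image of $\phi$, hence with a subgroup of $\Sel^+_2(R_n^+)$; (ii) Proposition \ref{grounded}, which gives $\Sel^+_2(R_n^+)\cong\Z/2\Z$ both for $n=2^s$ (part \eqref{grounded1}) and for $n=3\cdot 2^s$ (part \eqref{grounded2}); and (iii) the index formula $c(R_n)=2^{1+r(n)-r_+(n)}\,\overline{c}(R_n)$ of Proposition \ref{lint}. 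From (i) and (ii), $c(R_n)$ divides $2$, so $c(R_n)\in\{1,2\}$; since $c(R_n)\ne 1$ by Proposition \ref{dint} (recorded in Definition \ref{mare}\eqref{mare1}), we get $c(R_n)=2$, and moreover $\phi$ is surjective.

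It then remains to deduce $\overline{c}(R_n)=1$. From Proposition \ref{lint} we have $2=c(R_n)=2^{1+r(n)-r_+(n)}\,\overline{c}(R_n)$. Since $K_n=F_n(i)$ has degree $2$ over $F_n$, every prime of $F_n$ above $2$ lies below at least one prime of $K_n$ above $2$, so $r(n)\ge r_+(n)$ and the factor $2^{1+r(n)-r_+(n)}$ is an integer $\ge 2$. As $\overline{c}(R_n)$ is a positive power of $2$, the product can equal $2$ only if $2^{1+r(n)-r_+(n)}=2$ and $\overline{c}(R_n)=1$. (Concretely, $2$ is totally ramified in $\Q(\zeta_{2^s})$, and for $n=3\cdot 2^s$ the prime above $2$ is the compositum of a totally ramified prime over $\Q(\zeta_{2^s})$ with the inert prime over $\Q(\zeta_3)$; in either case $K_n$ has a single prime above $2$, hence so does $F_n$, so $r(n)=r_+(n)=1$.) This proves both \eqref{ex1} and \eqref{ex2} simultaneously.

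\emph{Main obstacle.} Once the prerequisites are granted, the argument is purely formal, so the genuine difficulty is not internal to this theorem: it resides in the exact-sequence theorem (proved via the Gröbner-basis identities of Lemma \ref{phi theta} and \eqref{pinto}), in the Selmer computation of Proposition \ref{grounded} (which rests on Weber's theorem and the odd class number of the maximal real subfield), and in Proposition \ref{lint}. The two points demanding a little care in the assembly are the appeal to Proposition \ref{dint} to rule out $c(R_n)=1$ — without it the divisibility $c(R_n)\mid 2$ would not pin down $c(R_n)$ — and the elementary ramification fact $r(n)=r_+(n)$, which is precisely what converts $c(R_n)=2$ into $\overline{c}(R_n)=1$.
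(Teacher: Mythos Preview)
Your argument is correct and follows essentially the same route as the paper: bound $c(R_n)$ above by $\#\Sel_2^+(R_n^+)=2$ via Proposition~\ref{grounded}, and combine with Proposition~\ref{lint} to pin down both invariants. The only cosmetic difference is that the paper first records $r(n)=r_+(n)=1$, so that Proposition~\ref{lint} reads $c(R_n)=2\,\overline{c}(R_n)\geq 2$ directly; together with $c(R_n)\leq 2$ this forces $c(R_n)=2$ and $\overline{c}(R_n)=1$ without a separate appeal to Proposition~\ref{dint}.
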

\begin{proof}
Suppose $n=2^s$ or $n=3\cdot 2^s$, $n\geq 8$.  Then there is one
prime in $K_n=\Q(\zeta_n)$ above $2$ and $r(n)=r_+(n)=1$.  Hence by
Proposition \ref{lint}, $c(R_n)=2\overline{c}(R_n)$.  But
by Proposition \ref{hawk}, $\Sel_2^+(R_n^+)\cong \Z/2\Z$.
Hence $c(R_n)\leq 2$ and we therefore must have
$c(R_n)=2$ and $\overline{c}(R_n)=1$.

\end{proof}

\section{Amalgamated products and the Clifford-cyclotomic group}
\label{yellow}



Set
$\overline{\G}_n = \pi(\Gn) \subseteq \SOT(R_n^+)$ and 
$\overline{\Ss\!\G}_n = \Ad(\Ss\!\Gn) \subseteq \SOT(R_n^+)$.

\begin{theorem}\label{amalgam}
Assume $4|n$, $n\geq 8$.  We have
\[
\Pro \Gn\cong\overline{\G}_n=G(4,n)\cong S_4*_{D_4}D_n.
\]
\end{theorem}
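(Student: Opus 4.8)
The plan is to establish the three isomorphisms in Theorem \ref{amalgam} in a chain. First, the isomorphism $\Pro\Gn \cong \overline{\G}_n$ is almost immediate: by definition $\overline{\G}_n = \pi(\Gn)$, and $\pi:\UT(R_n)\to\SOT(R_n^+)$ factors through $\PUT(R_n)$ by \eqref{finch}, with $\overline\pi$ injective; hence $\pi$ restricted to $\Gn$ has kernel exactly $\Gn\cap(\text{scalars})$, which is the group of scalar unitary matrices in $\Gn$. One must check this kernel is trivial, i.e. that the only scalar matrix in $\Gn$ is the identity — but the scalars in $\UTz(R_n)$ are $\langle\zeta_n\rangle\cdot I$ intersected with determinant-$\langle\zeta_n\rangle$ matrices, and since $\C=\UT(R_4)$ contains $iI$ while $\Gn$ contains $\zeta_n I$, this needs a small argument; more likely the intended reading is that $\Pro\Gn$ is by definition the image of $\Gn$ in $\PUT(R_n)$, and then $\Pro\Gn\cong\overline{\G}_n$ is just injectivity of $\overline\pi$. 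Either way this step is formal.

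The second isomorphism $\overline\G_n = G(4,n)$ is the geometric heart. Recall $G(r,s)\subseteq\SOT(\R)$ is generated by a rotation of order $r$ and one of order $s$ about perpendicular axes. I would compute $\pi(H)$ and $\pi(T_n)$ explicitly via the Adjoint formula \eqref{pi} (using $\pi(g)=\Ad(g/\sqrt{\det g})$): the gate $T_n=\mathrm{diag}(1,\zeta_n)$ has determinant $\zeta_n$, and $\pi(T_n)$ should be a rotation of order $n$ about the $z$-axis (in the Pauli basis $\sigma_z$ is the fixed axis), while $\pi(H)$ — coming from the Hadamard matrix — should be (conjugate to) a rotation of order $4$ about a perpendicular axis, or at least the two together generate a group with such generators after adjusting by an element of the image of $\C$. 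Since $\pi(\C)=\pi(\UT(R_4))$ is the image of the Clifford group, which is finite and maps onto the rotation group of the cube $S_4\subseteq\SOT$, and $T_n$ contributes the order-$n$ rotation, one checks $\overline\G_n$ is generated by a $4$-fold and an $n$-fold rotation about perpendicular axes, giving $\overline\G_n = G(4,n)$. The containment $G(4,n)\subseteq\SOT(R_n^+)$ is already asserted in the introduction, so it remains only to pin down the generators; this is where I expect the real bookkeeping, matching $\pi(H),\pi(T_n)$ and the cube group against the standard presentation of $G(4,n)$.

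The third isomorphism $G(4,n)\cong S_4*_{D_4}D_n$ is Theorem \ref{gone}\eqref{gone3}, due to Rivin–Sarnak (cited as \cite{rs}), so I would simply invoke it. Thus the only genuine work is the middle step.

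\medskip
\textbf{The main obstacle.} The crux is verifying that $\overline\G_n$ is generated by rotations of orders exactly $4$ and $n$ about perpendicular axes — equivalently, identifying $\pi(\Gn)$ with the abstractly-defined group $G(4,n)$ rather than merely a quotient of it. Two things must be checked carefully: (i) that $\pi(T_n)$ really has order $n$ (not a proper divisor) in $\SOT(R_n^+)$, which follows from $\zeta_n$ being a primitive $n$-th root of unity together with the fact that $\Ad$ kills only $\pm I$, so $\pi(T_n)^k = I$ forces $\zeta_n^{k}$ to be a square root of $1$, i.e. $n\mid 2k$ — so actually $\pi(T_n)$ has order $n$ when $4\mid n$ since then $\zeta_n^{n/2}=-1\ne 1$; and (ii) that no unexpected relations collapse the amalgam, which is precisely the content of \cite{rs} and so can be cited. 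Once the explicit $3\times 3$ matrices $\pi(H)$ and $\pi(T_n)$ are written down and seen to be an order-$4$ and an order-$n$ rotation about perpendicular axes, the middle equality $\overline\G_n = G(4,n)$ follows, and composing with \cite{rs} completes the proof.
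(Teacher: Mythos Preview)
Your approach is essentially the paper's: treat $\Pro\Gn\cong\overline\G_n$ as formal via the injectivity of $\overline\pi$, cite Theorem~\ref{gone}\eqref{gone3} for the amalgam, and verify $\overline\G_n=G(4,n)$ by exhibiting explicit perpendicular rotations of orders $4$ and $n$ among the images of the generators.

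One concrete correction to your bookkeeping: your first guess $\pi(H)$ is \emph{not} an order-$4$ rotation. A direct check gives $H^2=iI$, so $\pi(H)$ has order~$2$. The paper sidesteps this by working with $HT_n^{n/2}$ instead: since $T_n^{n/2}=\mathrm{diag}(1,-1)$, one computes $\det(HT_n^{n/2})=i$ and
\[
\Ad\!\left(\zeta_8^{-1}HT_n^{n/2}\right)=\begin{pmatrix}0&0&1\\0&1&0\\-1&0&0\end{pmatrix},
\]
a rotation by $\pi/2$ about the $y$-axis, while $\pi(T_n)$ is rotation by $2\pi/n$ about the $z$-axis. Your hedge (``after adjusting by an element of the image of $\C$'') covers this, and indeed $T_n^{n/2}\in\C$, so the fix is exactly of the kind you anticipated. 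Also, your order computation for $\pi(T_n)$ is slightly overcomplicated: the kernel of $\pi$ on $\UT$ is all scalars, so $\pi(T_n^k)=I$ iff $\zeta_n^k=1$, giving order exactly~$n$ with no parity subtlety.
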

\begin{proof}
By Proposition \ref{valley3}\eqref{valley32} and Theorem \ref{gone}\eqref{gone3} it suffices to show that $\pi(H_nT^{2m})$ and $\pi(T_n)$ are rotations of order 4 and $n$ about orthogonal axes. Now
$$T^{2m} = \left[ \begin{matrix}1 & 0 \\ 0 & -1 \end{matrix} \right],\quad
\mbox{so}\quad
HT^{2m} =  \frac{1}{2}\left[ \begin{matrix}1+i & -1-i \\ 1+i & 1+i \end{matrix} \right] $$
which has determinant  $i = \zeta_8^2$. Define 
$$\widetilde{H} = \frac{1}{\zeta_8} HT^{2m} = \frac{1}{2} \left[ \begin{matrix} \zeta_8 - \zeta_8^3 & \zeta_8^3 - \zeta_8 \\ \zeta_8-\zeta_8^3 & \zeta_8-\zeta_8^3\end{matrix} \right] = \frac{1}{\sqrt{2}}\left[ \begin{matrix} 1 & -1 \\ 1 & 1 \end{matrix}\right]\, .$$
We calculate using (\ref{pi}):
$$\pi(HT^{2m}) = \Ad(\widetilde{H}) = \left[ \begin{matrix} 0 & 0 & 1 \\ 0 & 1 & 0 \\ -1 & 0 & 0 \end{matrix}\right] = \left[ \begin{matrix} \cos(\pi/2) & 0 & \sin (\pi/2) \\ 0 & 1 & 0 \\ -\sin(\pi/2) & 0 & \cos(\pi/2) \end{matrix}\right]\, ,$$
which is a rotation around the $y$-axis by $\pi/2$, while 
$$\pi(T_n) = \Ad\left(\left[\begin{matrix} \zeta_{2n}^{-1} & 0 \\ 0 & \zeta_{2n}\end{matrix}\right]\right) = \Ad\left(\left[\begin{matrix} \cos(\pi/n) -i \sin(\pi/n) & 0 \\ 0 & \cos(\pi/n) + i \sin(\pi/n)\end{matrix}\right]\right)=$$
$$\left[\begin{matrix}\cos(2\pi/n) & -\sin(2\pi/n) & 0\\ \sin(2\pi/n) & \cos(2\pi/n) & 0 \\ 0 & 0 & 1 \end{matrix}\right]$$
is a rotation by $2\pi/n$ about the $z$-axis.
\end{proof}
 The finite subgroups of $\SUT(\CC)$ are well-known; see
\cite[Th\'{e}or\`{e}me I.3.7]{v}.
Let $D_n$ be the dihedral group of order $2n$.
Denote by $E_{48}$ the tetrahedral group, i.e., the degree-$2$ central
extension of $S_4$, and by  $Q_{4n}$ the quaternion group of
order $4n$ (called {\em dicyclique} in \cite{v}).  
We have $Q_{4n}/\langle \pm 1
\rangle\cong D_n$.
\begin{corollary}
\label{peachy}
Let $\HH$ be the pullback of $\overline{\G}_n$
under the surjective map $\SUT(\CC)\stackrel{\Ad}{\rightarrow}\SOT(\R)$.
Then $\Ss\!\Gn\subseteq \HH$ with $[\HH:\Ss\!\Gn]=2$ and
\begin{equation}
\label{riddle}
\HH\cong E_{48}\ast_{Q_{16}}Q_{4n} .
\end{equation}
\end{corollary}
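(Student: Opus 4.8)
The plan is to lift the amalgamated product decomposition of $\overline{\G}_n = G(4,n) \cong S_4 \ast_{D_4} D_n$ (Theorem \ref{amalgam}) through the degree-$2$ covering $\SUT(\CC) \xrightarrow{\Ad} \SOT(\R)$. Pulling back a short exact sequence $1 \to \langle \pm 1\rangle \to \SUT(\CC) \to \SOT(\R) \to 1$ along the inclusion $\overline{\G}_n \hookrightarrow \SOT(\R)$ gives a central extension $1 \to \langle\pm 1\rangle \to \HH \to \overline{\G}_n \to 1$, so $[\HH : \ker] = 2$ where $\ker$ is the preimage of the trivial group, i.e. $\langle\pm 1\rangle$; more to the point $[\HH : \Ss\!\G_n] = 2$ once we check $\pi(\Ss\!\G_n) = \Ad(\Ss\!\G_n) = \overline{\Ss\!\G}_n$ has index $2$ inside $\overline{\G}_n = \pi(\G_n)$ — this follows because $\det(\G_n) = \langle \zeta_n\rangle$ has order $n$ by Proposition \ref{valley3}\eqref{valley31} while the map $\pi$ only sees $\det$ modulo squares, and an element of $\G_n$ lies in $\Ss\!\G_n$ iff its determinant is trivial, so the kernel of $\G_n \to \overline{\G}_n$ differs from $\Ss\!\G_n$ only by the scalar $-I \in \Ss\!\G_n$; unwinding, $\HH$ contains $\Ss\!\G_n$ with quotient of order $2$.

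Next I would identify the vertex and edge groups. Since $\HH \to \overline{\G}_n$ is a central $\Z/2\Z$-extension and $\overline{\G}_n = S_4 \ast_{D_4} D_n$, by the compatibility of amalgamated products with central extensions (or directly: the Bass–Serre tree for $\overline{\G}_n$ is also acted on by $\HH$ through the quotient, with vertex/edge stabilizers the preimages of $S_4$, $D_n$, $D_4$) one gets $\HH \cong \widetilde{S_4} \ast_{\widetilde{D_4}} \widetilde{D_n}$, where each tilde denotes the preimage in $\SUT(\CC)$ of the corresponding finite subgroup of $\SOT(\R)$. So the task reduces to computing those three preimages. The preimage of $S_4 \subseteq \SOT(\R)$ under $\Ad$ is the binary octahedral group $E_{48}$ of order $48$ (the degree-$2$ central extension of $S_4$ named in the text). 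The preimage of a dihedral group $D_m \subseteq \SOT(\R)$ is the binary dihedral / dicyclic group $Q_{4m}$ of order $4m$, using the stated fact $Q_{4m}/\langle\pm 1\rangle \cong D_m$. Applying this with $m = n$ and $m = 4$ yields vertex groups $E_{48}$ and $Q_{4n}$ and edge group $Q_{16}$, giving \eqref{riddle}.

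The one genuine subtlety — and the step I expect to be the main obstacle — is checking that the extension class is the right one, i.e. that the preimage of $S_4$ really is the binary octahedral group $E_{48}$ and not the split extension $S_4 \times \Z/2\Z$, and likewise that the preimage of $D_4$ is $Q_{16}$ rather than $D_4 \times \Z/2\Z$ or $D_8$. This is where one uses that the extension in question is the one coming from $\SUT(\CC) \to \SOT(\R)$ restricted to an actual finite subgroup of $\SOT(\R)$: the only finite subgroups of $\SUT(\CC)$ are cyclic, binary dihedral $Q_{4m}$, and the binary polyhedral groups $E_{24}, E_{48}, E_{120}$ (cited from \cite[Th\'{e}or\`{e}me I.3.7]{v}), and among these the ones surjecting onto $S_4$, $D_n$, $D_4$ with kernel $\langle\pm 1\rangle$ are forced to be $E_{48}$, $Q_{4n}$, $Q_{16}$ respectively — there is simply no other group on Vinberg's list that fits, so the extension cannot split. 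Finally I would note that the edge inclusions $Q_{16} \hookrightarrow E_{48}$ and $Q_{16} \hookrightarrow Q_{4n}$ are the ones compatible with $D_4 \hookrightarrow S_4$ and $D_4 \hookrightarrow D_n$ under $\Ad$, which is automatic from the construction since everything is happening inside the single ambient covering $\SUT(\CC) \to \SOT(\R)$.
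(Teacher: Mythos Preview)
Your proposal is correct and follows essentially the same route as the paper: compute $[\overline{\G}_n:\overline{\Ss\!\G}_n]=2$ from the determinant map (the paper phrases this as $\Pro\Gn/\Pro\Ss\!\Gn\cong\mu_n/\mu_n^2$), then pull the amalgam $S_4\ast_{D_4}D_n$ back along the double cover $\Ad$. The paper's proof is considerably terser—its ``Hence $\HH\cong E_{48}\ast_{Q_{16}}Q_{4n}$'' leaves implicit exactly the two points you spell out, namely that preimages of vertex/edge stabilizers under a central $\Z/2\Z$-extension give an amalgam decomposition of the total space (your Bass--Serre remark), and that the preimages of $S_4$, $D_4$, $D_n$ in $\SUT(\CC)$ are forced to be $E_{48}$, $Q_{16}$, $Q_{4n}$ by the classification of finite subgroups of $\SUT(\CC)$ cited from \cite{v}.
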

\begin{proof}
We have $\Pro\Gn/\Pro\Ss\!\Gn\cong \mu_n/\mu_n^2\cong \Z/2\Z$
since $n$ is even.  But 
\begin{equation}
\label{rested}
\Pro\Gn\cong \pi(\Gn)=\overline{\G}_n=G(4,n)
\cong S_4\ast_{D_4}D_n
\end{equation}
and $\Pro\Ss\!\Gn\cong \Ad(\Ss\!\Gn)=\overline{\Ss\!\G}_n$. Hence
$\Ad^{-1}(\overline{\G}_n):=\HH\cong E_{48}\ast_{Q_{16}}Q_{4n}$
and $\Ad^{-1}(\overline{\Ss\!\G}_n)=\Ss\!\Gn$.
Since $[\overline{\G}_n:\overline{\Ss\!\G}_n]=2$, it follows
that $[\HH:\Ss\!\Gn]=2$.

\end{proof}

\section{Euler-Poincar\'{e} characteristics}
\label{tall}

In this section we determine the Euler-Poincar\'{e}
characteristics of unitary groups over cyclotomic rings
and Clifford-cyclotomic groups.  These results will then be
used in  the proof of Theorem \ref{green}.
General references for Euler-Poincar\'{e} characteristics
are \cite[Chapter 9]{b} and \cite{s1}.

\begin{definition1}[{\cite[Section IX.6]{b}}]
\label{fht}
  A group $\Gamma$ is of {\em finite homological type} if $\Gamma$ has
  finite virtual cohomological dimension and, for every $\Gamma$-module $M$
  that is finitely generated as an abelian group and every natural number $i$,
  the homology group $H_i(\Gamma,M)$ is finitely generated.
\end{definition1}

\begin{proposition}
\label{snore}
\begin{enumerate}[\upshape (a)]
\item
\label{snore1}
Suppose 
$1\rightarrow \Gamma'\rightarrow \Gamma\rightarrow \Gamma''\rightarrow 1$
is a short exact sequence of groups with $\Gamma'$,
$\Gamma''$ of finite homological type.
If $\Gamma$ is virtually torsion-free, then $\Gamma$ is of finite
homological type and 
\[
\chi(\Gamma)=\chi(\Gamma')\chi(\Gamma'').
\]

\item
\label{snore2}
Suppose $\Gamma'$ is a subgroup of $\Gamma$ of finite index and
$\chi(\Gamma)$ is defined.  Then $\chi(\Gamma')$ is defined and
$\chi(\Gamma')=[\Gamma:\Gamma']\chi(\Gamma)$.
\item
\label{snore3}
Suppose $\Gamma'\leq \Gamma$ with
$\chi(\Gamma)$ and $\chi(\Gamma')$ both defined.
If $|\chi(\Gamma')|/|\chi(\Gamma)|$ is not a positive integer,
then $\Gamma'$ has infinite index in $\Gamma$.  In particular
this holds if $|\chi(\Gamma')|<|\chi(\Gamma)|$.
\item
\label{snore4}
Suppose $\Gamma'$ and $\Gamma''$ are finite groups with 
$A\leq \Gamma'$ and $A\leq \Gamma''$.
Let $\Gamma=\Gamma'\ast_{A}\Gamma''$.  Then
\[
\chi(\Gamma)=\frac{1}{\#\Gamma'} +\frac{1}{\#\Gamma''}-\frac{1}{\# A}.
\]
\end{enumerate}
\end{proposition}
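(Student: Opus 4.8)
The plan is to obtain all four assertions from the general machinery of Euler--Poincar\'e characteristics in \cite[Chapter IX]{b} together with Bass--Serre theory, since each is an instance of a standard property; the only work is bookkeeping that the ``finite homological type'' hypotheses propagate so that $\chi$ is defined at each stage.

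For \eqref{snore1}, the substance is that multiplicativity of $\chi$ in group extensions is already known once one checks that $\Gamma$ is of finite homological type. From the Lyndon--Hochschild--Serre spectral sequence $H_p(\Gamma'', H_q(\Gamma', M)) \Rightarrow H_{p+q}(\Gamma, M)$, finite generation of each $H_i(\Gamma', M)$ and of the homology of $\Gamma''$ with finitely generated coefficients forces each $H_i(\Gamma, M)$ to be finitely generated; moreover $\operatorname{vcd}\Gamma \le \operatorname{vcd}\Gamma' + \operatorname{vcd}\Gamma'' < \infty$, so together with the hypothesis that $\Gamma$ is virtually torsion-free we conclude that $\Gamma$ is of finite homological type. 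The identity $\chi(\Gamma) = \chi(\Gamma')\chi(\Gamma'')$ is then the multiplicativity of $\chi$ under extensions, \cite[Chapter IX]{b}.

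Assertion \eqref{snore2} is the standard multiplicativity of $\chi$ for finite-index subgroups, \cite[Chapter IX]{b}: one reduces to the case of a torsion-free \emph{normal} subgroup of finite index (obtained by replacing a torsion-free finite-index subgroup by its normal core, a finite intersection of conjugates), for which $\chi$ is the ordinary Euler characteristic of a finite $K(\pi,1)$ and hence scales by the degree under the corresponding covering. Assertion \eqref{snore3} is then immediate: under its hypothesis one has in particular $\chi(\Gamma)\neq 0$, and if moreover $[\Gamma:\Gamma'] = m < \infty$ then \eqref{snore2} gives $\chi(\Gamma') = m\,\chi(\Gamma)$, so $|\chi(\Gamma')|/|\chi(\Gamma)| = m$ is a positive integer, a contradiction (the case $\chi(\Gamma)=0$ being vacuous since then the displayed ratio is not defined).

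Finally, for \eqref{snore4} let $\Gamma = \Gamma' \ast_A \Gamma''$ with $\Gamma'$, $\Gamma''$, $A$ finite. Then $\Gamma$ acts on its Bass--Serre tree $T$, a contractible $1$-dimensional complex, with fundamental domain a single edge $e$ having endpoints $v'$, $v''$, edge stabilizer $A$, and vertex stabilizers $\Gamma'$ and $\Gamma''$; since $A$ has finite index in $\Gamma'$ and in $\Gamma''$, the group $\Gamma$ is virtually free, in particular of finite homological type. Applying additivity of $\chi$ over the cells of $T/\Gamma$ (equivalently, the Mayer--Vietoris sequence attached to the amalgam) gives $\chi(\Gamma) = \chi(\Gamma') + \chi(\Gamma'') - \chi(A)$, and since $\chi(F) = 1/\#F$ for any finite group $F$ \cite[Chapter IX]{b}, this is exactly $\frac{1}{\#\Gamma'} + \frac{1}{\#\Gamma''} - \frac{1}{\# A}$. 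There is no genuine obstacle here; the delicate point throughout is simply to track the finite-homological-type (hence $\chi$-defined) hypotheses, and to handle the degenerate $\chi=0$ case in \eqref{snore3}.
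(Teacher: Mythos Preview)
Your proposal is correct and follows essentially the same route as the paper: the paper simply cites \cite[Proposition~7.3(c),(d)]{b} for \eqref{snore1} and \eqref{snore2}, deduces \eqref{snore3} as an immediate consequence, and cites \cite[Corollaire~1, p.~104]{s1} for \eqref{snore4}, whereas you unpack those same references (LHS spectral sequence, Bass--Serre tree) a bit more explicitly. One small remark: the paper attributes \eqref{snore3} to \eqref{snore1}, but your derivation from \eqref{snore2} is the natural one and is clearly what is meant.
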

\begin{proof}
(a), (b) are parts (d) and (c) of \cite[Proposition 7.3]{b}.
(c) is an immediate consequence of (a), while
(d) is \cite[Corollaire 1, p.~104]{s1}.
\end{proof}
\begin{theorem}
\label{mouse}
Assume $4|n$.  Then $\chi (\Ss\!\Gn)= 
\chi(G(4,n))=\chi(\Pro\Gn)
=-\frac{1}{12} + \frac{1}{2n}$.
\end{theorem}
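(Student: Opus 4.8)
The plan is to read off all three equalities from the amalgam descriptions already in hand, using only the formal calculus of Euler--Poincar\'{e} characteristics in Proposition \ref{snore}. I will take $n\geq 8$, so that Theorem \ref{amalgam} and Corollary \ref{peachy} apply; the remaining case $n=4$, where $\Gn=\C$ and each of $\Ss\!\Gn$, $\Pro\Gn$, $G(4,n)$ is finite of order $24$, is an immediate check that $1/24=-1/12+1/8$. I first note that every group in the statement is a finite-index subgroup of an amalgam of two finite groups over a finite group, hence virtually free and a fortiori of finite homological type, so all the Euler--Poincar\'{e} characteristics below are defined.

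The middle equality is then immediate: Theorem \ref{amalgam} gives $\Pro\Gn\cong\overline{\G}_n=G(4,n)$, so $\chi(\Pro\Gn)=\chi(G(4,n))$. To evaluate this common value I apply Proposition \ref{snore}\eqref{snore4} to the amalgam $G(4,n)\cong S_4\ast_{D_4}D_n$ of Theorem \ref{gone}\eqref{gone3}:
\[
\chi(G(4,n))=\frac{1}{\#S_4}+\frac{1}{\#D_n}-\frac{1}{\#D_4}=\frac{1}{24}+\frac{1}{2n}-\frac{1}{8}=-\frac{1}{12}+\frac{1}{2n}.
\]
For $\chi(\Ss\!\Gn)$ I use the amalgam $\HH\cong E_{48}\ast_{Q_{16}}Q_{4n}$ of Corollary \ref{peachy}. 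Proposition \ref{snore}\eqref{snore4} gives
\[
\chi(\HH)=\frac{1}{\#E_{48}}+\frac{1}{\#Q_{4n}}-\frac{1}{\#Q_{16}}=\frac{1}{48}+\frac{1}{4n}-\frac{1}{16}=-\frac{1}{24}+\frac{1}{4n},
\]
and, since $[\HH:\Ss\!\Gn]=2$ by Corollary \ref{peachy}, Proposition \ref{snore}\eqref{snore2} yields $\chi(\Ss\!\Gn)=2\,\chi(\HH)=-\frac{1}{12}+\frac{1}{2n}$, which completes the argument.

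A cleaner variant for the last equality avoids Corollary \ref{peachy} entirely: the projection $\SUT(R_n)\to\PUT(R_n)$ restricts on $\Ss\!\Gn$ to a central extension $1\to\{\pm1\}\to\Ss\!\Gn\to\Pro\Ss\!\Gn\to1$, so $\chi(\Ss\!\Gn)=\tfrac12\chi(\Pro\Ss\!\Gn)$ by Proposition \ref{snore}\eqref{snore1}, while $[\Pro\Gn:\Pro\Ss\!\Gn]=2$ (established inside the proof of Corollary \ref{peachy}) gives $\chi(\Pro\Ss\!\Gn)=2\chi(\Pro\Gn)$ by Proposition \ref{snore}\eqref{snore2}; combining the two recovers $\chi(\Ss\!\Gn)=\chi(\Pro\Gn)$ directly. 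I do not anticipate a real obstacle: all the substance has already been spent in identifying $\Pro\Gn$ and $\HH$ (equivalently $\Ss\!\Gn$) with explicit amalgams of finite groups in Theorem \ref{amalgam} and Corollary \ref{peachy}, and what is left is bookkeeping with the orders $24,2n,8$ and $48,4n,16$ and the two indices equal to $2$ --- the only points to be careful about are that these Euler characteristics are defined at all (handled by virtual freeness) and that one uses the correct index in Proposition \ref{snore}\eqref{snore2}.
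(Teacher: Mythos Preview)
Your proof is correct and follows essentially the same route as the paper: compute $\chi(G(4,n))=\chi(\Pro\Gn)$ from the amalgam $S_4\ast_{D_4}D_n$ via Proposition \ref{snore}\eqref{snore4}, compute $\chi(\HH)$ from the amalgam $E_{48}\ast_{Q_{16}}Q_{4n}$ of Corollary \ref{peachy}, and then use $[\HH:\Ss\!\Gn]=2$ with Proposition \ref{snore}\eqref{snore2}. Your additions---the explicit $n=4$ check, the remark that virtual freeness guarantees the characteristics are defined, and the alternative derivation of $\chi(\Ss\!\Gn)=\chi(\Pro\Gn)$ via the central extension---are welcome but do not change the argument.
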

\begin{proof}
Let $\HH$ be as in Corollary \ref{peachy}.
By \eqref{riddle} and Proposition \ref{snore}\eqref{snore4} we have
\begin{equation*}
\begin{split}
\chi(\HH) = \chi(E_{48}\ast_{Q_{16}}Q_{4n}) &= \frac{1}{\#E_{48}} +
\frac{1}{\#Q_{4n}} - \frac{1}{\#Q_{16}} \\ &= \frac{1}{48} + \frac{1}{4n} -
\frac{1}{16} = -\frac{1}{24} + \frac{1}{4n}.
\end{split}
\end{equation*}
But $\Ss\!\Gn$ is an index-$2$ subgroup of $\HH$ from Corollary \ref{peachy}, 
so by Proposition \ref{snore}\eqref{snore2}
\[
\chi (\Ss\!\Gn)=
2\chi(\HH)= -\frac{1}{12} + \frac{1}{2n}.
\]
We have $\chi(G(4,n))=\chi(\Pro\Gn)
=-1/12 + 1/2n$ from \eqref{rested}
and Proposition \ref{snore}\eqref{snore4}.
\end{proof}

We will need the following in the proof of Theorem \ref{plants} below.
\begin{remark1}\label{rem:def-reductive} Recall that a connected linear
 algebraic group $G$ over a perfect field is
  {\em reductive} if it admits a representation with finite kernel that is
  a direct sum of irreducible representations.  An alternative definition
  sufficient for this paper is that $G$ over an algebraically closed field
  is reductive if and only if every smooth connected unipotent normal
  subgroup of $G$ is trivial, and if $k$ is perfect then $G$ is
  reductive over $k$ if and only if it is over $\bar k$.
\end{remark1}

\begin{definition1}
\label{garden}
{\rm 
Set
\[
M_n:=2^{1-[F_n:\Q]}\left|\zeta_{F_n}(-1)\right|\prod_{\fp|2}\left| 1-\Nm_{F_n/\Q}(\fp )\right|.
\]
 }
\end{definition1}

\begin{theorem}
\label{plants}
Suppose $n\geq 8$ and $4|n$ with $r(n)$, $r_+(n)$ as in Definition \textup{\ref{mare}\eqref{mare2}}.
\begin{enumerate}[\upshape (a)]
\item
\label{most}
$\chi(\SUT(R_n))=-M_n/2$.
\item
\label{host}
$\chi(\PSUT(R_n))=2\chi(\SUT(R_n))=-M_n$.
\item
\label{ghost}
$\chi(\PUTz(R_n))=\chi(\SUT(R_n))=-M_n/2$.
\item
\label{toast}
\[
\chi(\PUT(R_n))=\frac{\chi(\SUT(R_n))}{2^{r(n)-r_+(n)}}=
\frac{\chi(\PSUT(R_n))}{2^{1+r(n)-r_+(n)}}=-\frac{M_n}{2^{1+r(n)-r_{+}(n)}}.
\]
\item
\label{roast}
Put $c_n=c(R_n)$ and 
$\overline{c}_n=\overline{c}(R_n)$ 
as in Definition \textup{\ref{mare}}.
Then 
\begin{equation}
\label{boast}
\chi(\SOT(R_n^+))=\chi(\PUT(R_n))/\overline{c}_n
=-\frac{M_n}{2^{1+r(n)-r_+(n)}\overline{c}_n}
=\chi(\PSUT(R_n))/c_n
=-\frac{M_n}{c_n}.
\end{equation}
\end{enumerate}
\end{theorem}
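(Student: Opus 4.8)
The plan is to prove \eqref{most} directly from the work of Serre and Harder, and then to deduce \eqref{host}--\eqref{roast} by formal manipulation of Euler--Poincar\'{e} characteristics, using the multiplicativity properties in Proposition \ref{snore}, the index computations of Propositions \ref{index} and \ref{lint}, and the definitions of $c_n$, $\overline{c}_n$, $M_n$.

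\emph{Part \eqref{most}.} Since $2$ is a unit in $R_n^+ = \OO_{F_n}[1/2]$, the Hamilton quaternion algebra over $R_n^+$ is Azumaya, so $A_1^*$ base-changed to $R_n^+$ is a smooth affine group scheme whose generic fibre is the simply connected simple group $\SL_1$ of a quaternion algebra over $F_n$; thus $\SUT(R_n) = A_1^*(R_n^+)$ is an $S$-arithmetic subgroup of a simply connected simple group, where $S$ consists of the archimedean places of $F_n$ and the primes above $2$. In particular it is of finite homological type, so $\chi(\SUT(R_n))$ is defined. I would then invoke Serre's formula \cite{s1}, which via Harder's Gauss--Bonnet theorem expresses $\chi$ of such an $S$-arithmetic group as the product of an archimedean term and the local Euler--Poincar\'{e} factors at the finite places of $S$: here $A_1^*(F_n \otimes_\Q \R) \cong \SUT(\CC)^{[F_n:\Q]}$ is compact, so the archimedean term supplies $\pm 2^{1-[F_n:\Q]}\zeta_{F_n}(-1)$, and each prime $\fp \mid 2$ contributes a factor of absolute value $\Nm_{F_n/\Q}(\fp) - 1$. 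Assembling these with $M_n$ as in Definition \ref{garden} --- and checking that $[F_n:\Q] = \phi(n)/2$ is even for $4 \mid n$, $n \geq 8$, so that $\zeta_{F_n}(-1) > 0$, the remaining signs combining to $-1$ --- gives $\chi(\SUT(R_n)) = -M_n/2$. As a check, for $n = 8$ this gives $-1/48 = -1/12 + 1/16 = \chi(\Ss\!\Gn)$ of Theorem \ref{mouse} (note $\Ss\!\G_8 = \SUT(R_8)$), and for $n = 2^s$ it recovers Theorem \ref{gut}\eqref{gut2} through \eqref{roast}.

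\emph{Parts \eqref{host}--\eqref{roast}.} The kernel of $\SUT(R_n) \twoheadrightarrow \PSUT(R_n)$ in \eqref{crow} is the central subgroup $\langle \pm 1\rangle$, so there is a short exact sequence $1 \to \langle \pm 1\rangle \to \SUT(R_n) \to \PSUT(R_n) \to 1$; as $\SUT(R_n)$ is virtually torsion-free and $\PSUT(R_n)$ --- having finite index in the arithmetic group $\SOT(R_n^+)$ --- is of finite homological type, Proposition \ref{snore}\eqref{snore1} gives $\chi(\PSUT(R_n)) = \chi(\SUT(R_n))/\chi(\langle\pm1\rangle) = 2\chi(\SUT(R_n)) = -M_n$, proving \eqref{host}. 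Proposition \ref{snore}\eqref{snore2} and Proposition \ref{index} then give the rest: $[\PUTz(R_n):\PSUT(R_n)] = 2$ by \eqref{index3} yields $\chi(\PUTz(R_n)) = \tfrac12\chi(\PSUT(R_n)) = -M_n/2 = \chi(\SUT(R_n))$, which is \eqref{ghost}; $[\PUT(R_n):\PSUT(R_n)] = 2^{1+r(n)-r_+(n)}$ by \eqref{index1} yields $\chi(\PUT(R_n)) = \chi(\PSUT(R_n))/2^{1+r(n)-r_+(n)} = -M_n/2^{1+r(n)-r_+(n)} = \chi(\SUT(R_n))/2^{r(n)-r_+(n)}$, which is \eqref{toast}; and finally, since $\PUT(R_n)$ and $\PSUT(R_n)$ are normal in $\SOT(R_n^+)$ (Corollary \ref{foal}) of finite indices $\overline{c}_n$ and $c_n$ respectively (Definition \ref{mare}), Proposition \ref{snore}\eqref{snore2} again gives $\chi(\SOT(R_n^+)) = \chi(\PUT(R_n))/\overline{c}_n = \chi(\PSUT(R_n))/c_n$; Proposition \ref{lint} identifies $2^{1+r(n)-r_+(n)}\overline{c}_n$ with $c_n$, so this equals $-M_n/c_n$, which is \eqref{boast}.

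The main obstacle is \eqref{most}: correctly specializing Serre's general formula to $A_1^*$, i.e.\ matching its archimedean Gauss--Bonnet term and its local Euler--Poincar\'{e} factors at the primes above $2$ to the explicit quantity $M_n$, and pinning down the overall sign. Everything after that is bookkeeping; the one further point requiring attention is that each group in play is of finite homological type (respectively virtually torsion-free), which holds because they are all, up to finite kernels and finite index, commensurable with the $S$-arithmetic group $\SUT(R_n)$.
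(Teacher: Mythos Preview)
Your proposal is correct and follows essentially the same route as the paper: part \eqref{most} via Serre's formula \cite[Section 3.7]{s1} (resting on Harder), and parts \eqref{host}--\eqref{roast} by the multiplicativity properties in Proposition \ref{snore} together with the index information from Proposition \ref{index} and Definition \ref{mare}. The only differences are cosmetic: you invoke Proposition \ref{snore}\eqref{snore2} where the paper phrases the same step through Proposition \ref{snore}\eqref{snore1}, and you sketch the finite-homological-type check via commensurability with $\SUT(R_n)$, whereas the paper spells this out by citing the Borel--Serre result that torsion-free reductive $S$-arithmetic groups are FL and verifying that restriction of scalars preserves reductivity.
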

\begin{proof}
\eqref{most} follows from a result of Harder \cite[Section 3.7, (*)]{s1}.

The claims \eqref{host}, \eqref{ghost}, \eqref{toast}, \eqref{roast}
 are obtained by combining this with Proposition \ref{snore},
once we verify that all groups involved are of finite homological type.
We start from the fact, due to Borel and Serre \cite[page 218]{b}, that a
torsion-free reductive $S$-arithmetic group $\Gamma$ is of type FL 
(i.e., that the
$\Z\Gamma$-module $\Z$ has a finite free resolution).  This is stated for arithmetic
groups over $\Q$; however, the result follows more generally by restriction
of scalars.  
The only problem is to verify that restriction of scalars
preserves reductivity.  As noted in Remark \ref{rem:def-reductive}, 
in characteristic $0$ an algebraic group
is reductive over $k$ if and only if it is reductive over $\bar k$, and
for a finite extension $K/F$ we have
$\Res_{K/F} G \otimes_F \bar F \equiv (G \otimes_K \bar F)^{[K:F]}$.  A finite
direct product of reductive groups is clearly reductive, so it follows that
if $G$ is reductive, then so is $\Res_{K/F} G$.  As $\SUT$ is a simple group,
it is certainly reductive.
This shows that our groups are all VFL
(as usual, the subgroup of matrices congruent to $1$ modulo a large prime is torsion-free).

Since free modules are projective, VFL implies VFP, and groups of type VFP are of finite
homological type.  This is enough to apply \cite[Proposition 7.3]{b}.

\eqref{host}: Apply Proposition \ref{snore}\eqref{snore1} to 
\[
1\longrightarrow \langle\pm 1\rangle\longrightarrow \SUT(R_n)\longrightarrow
\PSUT(R_n)\longrightarrow 1 ,
\]
using the fact that $\SUT(R_n)$ is virtually torsion-free because it
is arithmetic (so a sufficiently small congruence subgroup is torsion-free).

\eqref{ghost}: Apply Proposition \ref{snore}\eqref{snore1} to
\[
1\longrightarrow \PSUT(R_n)\longrightarrow \PUTz(R_n)
\longrightarrow \Z/2\Z\longrightarrow 0 
\]
from Proposition \ref{index}\eqref{index3}.  To show that $\PUTz(R_n)$
is virtually torsion-free, it suffices to show that the finite-index
subgroup $\PSUT(R_n)$ is virtually torsion-free.  But $\PSUT(R_n)$
is virtually torsion-free since it is a finite quotient 
$\SUT(R_n)$, which is virtually torsion-free from \eqref{host}.

\eqref{toast}: Apply Proposition \ref{snore}\eqref{snore1} to 
\[
1\longrightarrow \PSUT(R_n)\longrightarrow \PUT(R_n)\longrightarrow
(\Z/2\Z)^{1+r(n)-r_+(n)}\longrightarrow 0
\]
from Proposition \ref{index}\eqref{index1}.  The group
$\PUT(R_n)$ is virtually torsion-free because its finite-index
subgroup $\PSUT(R_n)$ is virtually torsion-free from \eqref{ghost}.

\eqref{roast}: Apply Proposition \ref{index}\ref{index1}
to
\begin{align*}
1 &\longrightarrow  \PUT(R_n)\stackrel{\overline{\pi}}{\longrightarrow}
\SOT(R_n^+)\longrightarrow \overline{C}(R_n)\longrightarrow 1\text{ and}\\
& 1\longrightarrow \PSUT(R_n)\stackrel{\Ad}{\longrightarrow}
\SOT(R_n^+)\longrightarrow C(R_n)\longrightarrow 1,
\end{align*}
as in Definition \ref{mare}; $\#\overline{C}(R_n)=\overline{c}(R_n)=\overline{c}_n$ and $\#C(R_n)=c(R_n)=c_n$.  The group $\SOT(R_n^+)$
is virtually torsion-free since it is arithmetic.

\end{proof}
\begin{remark1}
\label{dream}
{\rm
Suppose $n=2^s\geq 8$. Then $r(n)=r_+(n)=1$.
Serre \cite[p.~48]{s2} uses Tamagawa numbers to show in this case that
$\chi(\SOT(R_n^+))=-M_n/2$ as in Theorem \ref{gut}\eqref{gut2}.  
Theorem \ref{plants}\eqref{roast} then shows that $c(R_n)=2$
and $\overline{c}(R_n)=1$, 
giving an independent proof of Theorem \ref{ex}\eqref{ex1}.
}
\end{remark1}

\section{Proof of Theorem \texorpdfstring{\ref{green}}{1.2}}
\label{pork}

We first prove Theorem \ref{green}\eqref{green2}.  
It is already known that $\Ss\!\Gn=\SUT(R_n)$ for $n=8, 12, 16, 24$
(Theorem \ref{gone}\eqref{gone1}).
We will prove that $\Ss\!\Gn$ is not a finite-index subgroup of
$\SUT(R_n)$ otherwise.  By Proposition \ref{snore}\eqref{snore2},
to do this it suffices to show 
$\lvert\chi(\Ss\!\Gn)\rvert<\lvert\chi(\SUT(R_n))\rvert$ 
for $n\notin\{8, 12,16,24\}$

Let $S$ be the places of $F_n=K^+_n$ above $2\infty$ and denote
by $\zeta_{F_n, S}(s)$ the Dedekind zeta function of $F_n$ with the
Euler factors at finite places in $S$ omitted.  Then the
Euler-Poincar\'e characteristic of $\Gamma_n=\SUT(R_n)$ is given in
\cite[Section 3.7]{s1}:
\begin{align}
\label{snail}
\lvert\chi(\Gamma_n)\rvert =
2^{-[F_n:\Q]}\lvert\zeta_{F_n,S}(-1)\rvert =&
2^{-[F_n:\Q]}\lvert\zeta_{F_n}(-1)\rvert\prod_{\fp|2}\lvert1-N_{F_n/\Q}(\fp)\rvert\\
\nonumber \geq &  2^{-[F_n:\Q]}\lvert\zeta_{F_n}(-1)\rvert.
\end{align}
 By the functional
equation for $\zeta_{F_n}$,
$$\lvert\zeta_{F_n}(-1)\rvert =
\zeta_{F_n}(2)\lvert\Disc(F_n)\rvert^{3/2}(2\pi^2)^{-[F_n:\Q]}$$
and
by \cite[Proposition 2.7]{w}
$$\lvert\Disc(K_n)\rvert =
\frac{n^{\phi(n)}}{\prod_{p|n}p^{\phi(n)/(p-1)}}.$$
As for $F_n$, let $f=\sqrt{\lvert N_{F_n/\Q}\Disc(K_n/F_n)\rvert}$. 
Then $f=1$
unless $n$ is a power of $2$, in which case $f=2$.  Now we have
$$\lvert\Disc(F_n)\rvert =
\sqrt{\frac{\lvert\Disc(K_n)\rvert}{\lvert N_{F_n/\Q}\Disc(K_n/F_n)\rvert}} =
\frac{n^{\phi(n)/2}}{f\prod_{p|n}p^{\phi(n)/(2(p-1))}}$$  
using standard properties of the discriminant in towers
\cite[Corollary 2.10, p.~202]{neu}.

Hence, 
\begin{equation*}
\begin{split}
\lvert\chi(\Gamma_n)\rvert& \geq 2^{-[F_n:\Q]}\lvert\zeta_{F_n}(-1)\rvert=\zeta_{F_n}(2)\lvert\Disc(F_n)\rvert^{3/2}(2\pi)^{-2[F_n:\Q]}\\
&>\lvert\Disc(F_n)\rvert^{3/2}(2\pi)^{-2[F_n:\Q]}=\left(\frac{n^{\phi(n)/2}}{f\prod_{p|n}p^{\phi(n)/(2(p-1))}}\right)^{3/2}(2\pi)^{-2[F_n:\Q]}\\
&=\frac1{f^{3/2}}\left(\left(\frac{n}{\prod_{p|n}p^{1/(p-1)}}\right)^{3/2}(2\pi)^{-2}\right)^{[F_n:\Q]}\\
&>\frac1{2^{3/2}}\left(\left(\frac{n}
{2\prod_{p|n,\,p>2}p^{1/2}}\right)^{3/2}(2\pi)^{-2}\right)^{[F_n:\Q]}\\
&>\frac1{2^{3/2}}\left(\left(\frac{n}{2(n/4)^{1/2}}\right
)^{3/2}(2\pi)^{-2}\right)^{[F_n:\Q]}
=\frac{\left(n^{3/4}(2\pi)^{-2}\right)^{[F_n:\Q]}}{2\sqrt{2}} ,
\end{split}
\end{equation*}
which is greater than $\frac1{2\sqrt{2}}$ and hence greater than
$\frac{1}{12} - \frac{1}{2n}$ as long as $n>134.5>(2\pi)^{8/3}$.
If $n\notin\{8,12,16,24\}$ , $4|n$, and $8<n\leq 132$, then
it can be manually checked from  \eqref{snail} that we still have
$|\chi(\Gamma_n)|>\frac{1}{12}-\frac{1}{2n}$.
Reassuringly $\chi(\Gamma_n)=1/12 -1/(2n)$ for $n=8, 12, 16, 24$.
Hence $[\SUT(R_n):\Ss\!\Gn]=\infty$ for $4|n$,
$n\geq 8$, and $n\notin\{8, 12, 16, 24\}$ by Proposition \ref{snore}\eqref{snore2}, proving
Theorem \ref{green}\eqref{green2}.

To prove Theorem \ref{green}\eqref{green1},
note that $n=[\Gn:\Ss\!\Gn]=[\UTz(R_n):\SUT(R_n)]$
by Proposition \ref{valley3}\eqref{valley31} and \eqref{valley4}.
Hence $[\SUT(R_n):\Ss\!\Gn]=[\UTz(R_n):\Gn]$, and 
so Theorem \ref{green}\eqref{green2} together with Theorem \ref{gone}\eqref{gone1}
implies Theorem \ref{green}\eqref{green1}.

The proof of Theorem \ref{green}\eqref{green3} is similar:
both surjections 
$\Gn\twoheadrightarrow\Pro\Gn \cong \overline{\G}_n$ 
and $\UTz(R_n)\twoheadrightarrow \PUTz(R_n)$ 
have kernel of order $n$.  Hence $[\UTz(R_n):\Gn]=[\PUTz(R_n):\Pro\Gn]$.
But then we have
\[
\pi(\Gn)=G(4,n)\cong  \Pro\Gn\subseteq
\pi(\UTz(R_n))\cong \PUTz(R_n)\subseteq \SOT(R_n^+).
\]
Hence $[\UTz(R_n):\Gn]=\infty$ implies $[\SOT(R_n^+):G(4,n)]=\infty$.
Theorem \ref{green}\eqref{green3} then follows from Theorem \ref{green}
\eqref{green1}
and Theorem \ref{gone}\eqref{gone1}, concluding the proof of Theorem
\ref{green}.

\bibliographystyle{plain}
\bibliography{CCf}
\end{document}